\newtheorem{thm}{Theorem}
\newtheorem{lem}[thm]{Lemma}
\newtheorem{prop}[thm]{Proposition}
\theoremstyle{definition}
\newtheorem{ex}[thm]{Example}
\providecommand{\abs}[1]{\lvert#1\rvert}
\providecommand{\Abs}[1]{\Bigl\lvert#1\Bigr\rvert}
\providecommand{\norm}[1]{\lVert#1\rVert}
\begin{document}

\title[Dirichlet sequences]{Kernel based Dirichlet sequences}

\author{Patrizia Berti}
\address{Patrizia Berti, Dipartimento di Matematica Pura ed Applicata ``G. Vitali'', Universit\`a di Modena e Reggio-Emilia, via Campi 213/B, 41100 Modena, Italy} \email{patrizia.berti@unimore.it}

\author{Emanuela Dreassi}
\address{Emanuela Dreassi, Dipartimento di Statistica, Informatica, Applicazioni, Universit\`a di Firenze, viale Morgagni 59, 50134 Firenze, Italy} \email{emanuela.dreassi@unifi.it}

\author{Fabrizio Leisen}
\address{Fabrizio Leisen, School of Mathematical Sciences, University of Nottingham, University Park, Nottingham, NG7 2RD, UK}\email{fabrizio.leisen@gmail.com}

\author{Luca Pratelli}
\address{Luca Pratelli, Accademia Navale, viale Italia 72, 57100 Livorno,
Italy} \email{pratel@mail.dm.unipi.it}

\author{Pietro Rigo}
\address{Pietro Rigo (corresponding author), Dipartimento di Scienze Statistiche ``P. Fortunati'', Universit\`a di Bologna, via delle Belle Arti 41, 40126 Bologna, Italy}
\email{pietro.rigo@unibo.it}

\keywords{Bayesian nonparametrics, Central limit theorem, Dirichlet sequence, Exchangeability, Predictive distribution, Random probability measure, Regular conditional distribution}
\subjclass[2020]{60G09, 60G25, 60G57, 62F15, 62M20}

\begin{abstract}
Let $X=(X_1,X_2,\ldots)$ be a sequence of random variables with values in a standard space $(S,\mathcal{B})$. Suppose
\begin{gather*}
X_1\sim\nu\quad\text{and}\quad P\bigl(X_{n+1}\in\cdot\mid X_1,\ldots,X_n\bigr)=\frac{\theta\nu(\cdot)+\sum_{i=1}^nK(X_i)(\cdot)}{n+\theta}\quad\quad\text{a.s.}
\end{gather*}
where $\theta>0$ is a constant, $\nu$ a probability measure on $\mathcal{B}$, and $K$ a random probability measure on $\mathcal{B}$. Then, $X$ is exchangeable whenever $K$ is a regular conditional distribution for $\nu$ given any sub-$\sigma$-field of $\mathcal{B}$. Under this assumption, $X$ enjoys all the main properties of classical Dirichlet sequences, including Sethuraman's representation, conjugacy property, and convergence in total variation of predictive distributions. If $\mu$ is the weak limit of the empirical measures, conditions for $\mu$ to be a.s. discrete, or a.s. non-atomic, or $\mu\ll\nu$ a.s., are provided. Two CLT's are proved as well. The first deals with stable convergence while the second concerns total variation distance.
\end{abstract}

\maketitle

\section{Introduction}\label{intro}

Throughout, $S$ is a Borel subset of a Polish space and $\mathcal{B}$ the Borel $\sigma$-field on $S$. All random elements are defined on a common probability space, say $(\Omega,\mathcal{A},P)$. Moreover,
\begin{gather*}
X=(X_1,X_2,\ldots)
\end{gather*}
is a sequence of random variables with values in $(S,\mathcal{B})$ and
\begin{gather*}
\mathcal{F}_n=\sigma(X_1,\ldots,X_n).
\end{gather*}

We say that $X$ is a {\em Dirichlet sequence}, or a {\em Polya sequence}, if its predictive distributions are of the form
\begin{gather*}
P\bigl(X_{n+1}\in\cdot\mid\mathcal{F}_n\bigr)=\frac{\theta\,P(X_1\in\cdot)+\sum_{i=1}^n\delta_{X_i}(\cdot)}{n+\theta}\quad\quad\text{a.s.}
\end{gather*}
for all $n\ge 1$ and some constant $\theta>0$. The finite measure $\theta\,P(X_1\in\cdot)$ is called the {\em parameter} of $X$. Here and in the sequel, for each $x\in S$, we denote by $\delta_x$ the unit mass at $x$.

Let $\mathcal{L}_0$ be the class of Dirichlet sequences. As it can be guessed from the definition, each element of $\mathcal{L}_0$ is {\em exchangeable}. We recall that $X$ is exchangeable if
\begin{gather*}
\pi(X_1,\ldots,X_n)\sim (X_1,\ldots,X_n)\quad\text{for all }n\ge 2\text{ and all permutations }\pi\text{ of }S^n.
\end{gather*}
A permutation of $S^n$ is meant as a map $\pi:S^n\rightarrow S^n$ of the form
\begin{gather*}
\pi(x_1,\ldots,x_n)=(x_{j_1},\ldots,x_{j_n})\quad\quad\text{for all }(x_1,\ldots,x_n)\in S^n,
\end{gather*}
where $(j_1,\ldots,j_n)$ is a fixed permutation of $(1,\ldots,n)$. An i.i.d. sequence is obviously exchangeable while the converse is not true. However, the distribution of an exchangeable sequence (with values in a standard space) is a mixture of the distributions of i.i.d. sequences; see Subsection \ref{c4rd5}.

\medskip

Since Ferguson, Blackwell and Mac Queen, $\mathcal{L}_0$ played a prevailing role in Bayesian statistics. It was for a long time the basic ingredient of Bayesian nonparametrics. And still today, the Bayesian nonparametrics machinery is greatly affected by $\mathcal{L}_0$ and its developments. In addition, $\mathcal{L}_0$ plays a role in various other settings, including population genetics and species sampling. The literature on $\mathcal{L}_0$ is huge and we do not try to summarize it. Without any claim of being exhaustive, we mention a few seminal papers and recent textbooks: \cite{ANT1974}, \cite{BMQ}, \cite{EW}, \cite{FERG}, \cite{GHOSVANDER}, \cite{HHMW}, \cite{LO}, \cite{PIT1996}, \cite{PITYOR}, \cite{SET}.

\vspace{0.2cm}

The object of this paper is a new class of exchangeable sequences, say $\mathcal{L}$, such that $\mathcal{L}\supset\mathcal{L}_0$. There are essentially two reasons for taking $\mathcal{L}$ into account. First, all main features of $\mathcal{L}_0$ are preserved by $\mathcal{L}$, including the Sethuraman's representation, the conjugacy property and the simple form of predictive distributions. Thus, from the point of view of a Bayesian statistician, $\mathcal{L}$ can be handled as simply as $\mathcal{L}_0$. Second, $\mathcal{L}$ is more flexible than $\mathcal{L}_0$ and allows to model more real situations. For instance, if $X\in\mathcal{L}$, the weak limit of the empirical measures is not forced to be a.s. discrete, but it may be a.s. non-atomic or even a.s. absolutely continuous with respect to a reference measure.

\subsection{Definition of $\mathcal{L}$}\label{v5ty7}

Obviously, the notion of Dirichlet sequence can be extended in various ways. In this paper, for $X$ to be an extended Dirichlet sequence, two conditions are essential. First, $X$ should be exchangeable. Second, the predictive distributions of $X$ should have a known (and possibly simple) structure. Indeed, to define a sequence $X$ via its predictive distributions has various merits. It is technically convenient (see the proof of Theorem \ref{om55d}) and makes the dynamics of $X$ explicit. Furthermore, having the predictive distributions in closed form makes straightforward the Bayesian predictive inference on $X$; see e.g. \cite{BDPR2021} and \cite{HMW}. We also note that, as claimed in \cite{HANPIT}: ``There are very few models for exchangeable sequences $X$ with an explicit prediction rule".

\vspace{0.2cm}

Let $\mathcal{P}$ be the collection of all probability measures on $\mathcal{B}$ and $\mathcal{C}$ the $\sigma$-field over $\mathcal{P}$ generated by the maps $p\mapsto p(A)$ for all $A\in\mathcal{B}$. A {\em kernel} on $(S,\mathcal{B})$ is a measurable map $K:(S,\mathcal{B})\rightarrow (\mathcal{P},\mathcal{C})$. Thus, $K(x)\in\mathcal{P}$ for each $x\in S$ and $x\mapsto K(x)(A)$ is a $\mathcal{B}$-measurable map for fixed $A\in\mathcal{B}$. Here, $K(x)(A)$ denotes the value attached to the event $A$ by the probability measure $K(x)$. (This notation is possibly heavy but suitable for this paper).

\vspace{0.2cm}

A quite natural extension of $\mathcal{L}_0$, among the possible ones, consists in replacing $\delta$ with any kernel $K$ in the predictive distributions of $X$. If $K$ is arbitrary, however, $X$ may fail to be exchangeable.

More precisely, fix $\nu\in\mathcal{P}$, a constant $\theta>0$ and a kernel $K$ on $(S,\mathcal{B})$. By the Ionescu-Tulcea theorem, there is a sequence $X$ such that
\begin{gather}\label{ct6h}
X_1\sim\nu\quad\text{and}\quad P\bigl(X_{n+1}\in\cdot\mid\mathcal{F}_n\bigr)=\frac{\theta\nu(\cdot)+\sum_{i=1}^nK(X_i)(\cdot)}{n+\theta}\quad\quad\text{a.s.}
\end{gather}
for all $n\ge 1$. Generally, however, $X$ is not exchangeable. As an obvious example, take the trivial kernel $K(x)=\nu^*$ for all $x\in S$, where $\nu^*\in\mathcal{P}$ but $\nu^*\neq\nu$. Then, condition \eqref{ct6h} implies that $X_2$ is not distributed as $X_1$.

\vspace{0.2cm}

Our starting point is that, for $X$ to be exchangeable, it suffices condition \eqref{ct6h} and
\begin{gather}\label{v56yh}
K\text{ is a regular conditional distribution (r.c.d.) for }\nu\text{ given }\mathcal{G}
\end{gather}
for some sub-$\sigma$-field $\mathcal{G}\subset\mathcal{B}$. We recall that $K$ is a r.c.d. for $\nu$ given $\mathcal{G}$ if $K(x)\in\mathcal{P}$ for each $x\in S$, the map $x\mapsto K(x)(A)$ is $\mathcal{G}$-measurable for each $A\in\mathcal{B}$, and
\begin{gather*}
\nu(A\cap G)=\int_GK(x)(A)\,\nu(dx)\quad\quad\text{for all }A\in\mathcal{B}\text{ and }G\in\mathcal{G}.
\end{gather*}
Equivalently, $K$ is a r.c.d. for $\nu$ given $\mathcal{G}$ if $K(x)\in\mathcal{P}$ for each $x\in S$ and
\begin{gather*}
K(\cdot)(A)=E_\nu(1_A\mid\mathcal{G}),\quad\nu\text{-a.s.,}\quad\text{for all }A\in\mathcal{B}.
\end{gather*}
Since $(S,\mathcal{B})$ is a standard space, for any sub-$\sigma$-field $\mathcal{G}\subset\mathcal{B}$, a r.c.d. for $\nu$ given $\mathcal{G}$ exists and is $\nu$-essentially unique. See e.g. \cite{BR2007} for more information on r.c.d.'s.

\vspace{0.2cm}

Condition \eqref{v56yh} makes the next definition operational.

\vspace{0.2cm}

Say that $X$ is a {\em kernel based Dirichlet sequence} if it is exchangeable and satisfies condition \eqref{ct6h} for some $\nu\in\mathcal{P}$, some constant $\theta>0$ and some kernel $K$ on $(S,\mathcal{B})$. In particular, $X$ is a kernel based Dirichlet sequence if conditions \eqref{ct6h}-\eqref{v56yh} hold. In the sequel, $\mathcal{L}$ denotes the collection of all $X$ satisfying conditions \eqref{ct6h}-\eqref{v56yh}.

\vspace{0.2cm}

If $X\in\mathcal{L}$ and $\mathcal{G}=\mathcal{B}$, then $K=\delta$ and $X\in\mathcal{L}_0$. At the opposite extreme, if $\mathcal{G}=\{\emptyset,S\}$, then $K(x)=\nu$ for $\nu$-almost all $x\in S$ and $X$ is i.i.d. Various other examples come soon to the fore. The following are from \cite{BDPR2021} (even if, when writing \cite{BDPR2021}, we didn't know yet that $X$ is exchangeable).

\vspace{0.2cm}

\begin{ex}\label{w4f}
Let $\mathcal{G}=\sigma(\mathcal{H})$, where $\mathcal{H}\subset\mathcal{B}$ is a countable partition of $S$ such that $\nu(H)>0$ for all $H\in\mathcal{H}$. A r.c.d. for $\nu$ given $\mathcal{G}$ is
\begin{gather*}
K(x)=\sum_{H\in\mathcal{H}}1_H(x)\,\nu(\cdot\mid H)=\nu\bigl[\cdot\mid H(x)\bigr]
\end{gather*}
where $H(x)$ denotes the only $H\in\mathcal{H}$ such that $x\in H$. Therefore, $X\in\mathcal{L}$ whenever
\begin{gather*}
X_1\sim\nu\quad\text{and}\quad P\bigl(X_{n+1}\in\cdot\mid\mathcal{F}_n\bigr)=\frac{\theta\nu(\cdot)+\sum_{i=1}^n\nu\bigl[\cdot\mid H(X_i)\bigr]}{n+\theta}\quad\quad\text{a.s.}
\end{gather*}
Note that
\begin{gather*}
P\bigl(X_{n+1}\in\cdot\mid\mathcal{F}_n\bigr)\ll\nu(\cdot)\quad\quad\text{a.s.}
\end{gather*}
This fact highlights a stricking difference between $\mathcal{L}$ and $\mathcal{L}_0$. In this example, if $\nu$ is non-atomic, the probability distributions of $X$ and $Y$ are singular for any $Y\in\mathcal{L}_0$.
\end{ex}

\vspace{0.2cm}

\begin{ex}\label{w2wi9}
Let $S=\mathbb{R}^2$ and $\mathcal{G}=\sigma(f)$ where $f(u,v)=u$ for all $(u,v)\in\mathbb{R}^2$. Let $\mathcal{B}_0$ be the Borel $\sigma$-field on $\mathbb{R}$ and $\mathcal{N}(u,1)$ the Gaussian law on $\mathcal{B}_0$ with mean $u$ and variance 1. Fix a probability measure $r$ on $\mathcal{B}_0$ and define
\begin{gather*}
\nu(A\times B)=\int_A\mathcal{N}(u,1)(B)\,r(du)\quad\quad\text{for all }A,\,B\in\mathcal{B}_0
\end{gather*}
where $\mathcal{N}(u,1)(B)$ denotes the value attached to $B$ by $\mathcal{N}(u,1)$. Then, a r.c.d. for $\nu$ given $\mathcal{G}$ is
\begin{gather*}
K(u,v)=\delta_u\times\mathcal{N}(u,1)\quad\quad\text{for all }(u,v)\in\mathbb{R}^2.
\end{gather*}
Hence, letting $X_i=(U_i,V_i)$, one obtains $X\in\mathcal{L}$ provided $(U_1,V_1)\sim\nu$ and
\begin{gather*}
P\bigl(U_{n+1}\in A,\,V_{n+1}\in B\mid\mathcal{F}_n\bigr)=\frac{\theta\nu(A\times B)+\sum_{i=1}^n1_A(U_i)\,\mathcal{N}(U_i,1)(B)}{n+\theta}\quad\quad\text{a.s.}
\end{gather*}
\end{ex}

\vspace{0.2cm}

\begin{ex}\label{u7z2}
Let $f:S\rightarrow S$ be a measurable map. If $\nu$ is $f$-invariant, that is $\nu=\nu\circ f^{-1}$, it may be reasonable to take
\begin{gather*}
\mathcal{G}=\bigl\{A\in\mathcal{B}:f^{-1}(A)=A\bigr\}.
\end{gather*}
As a trivial example, if $S=\mathbb{R}$, $f(x)=-x$ and $\nu$ is symmetric, then
\begin{gather*}
K(x)=\frac{\delta_x+\delta_{-x}}{2}
\end{gather*}
is a r.c.d. for $\nu$ given $\mathcal{G}$. Hence, $X\in\mathcal{L}$ whenever $X_1\sim\nu$ and
\begin{gather*}
P\bigl(X_{n+1}\in\cdot\mid\mathcal{F}_n\bigr)=\frac{2\,\theta\nu+\sum_{i=1}^n(\delta_{X_i}+\delta_{-X_i})}{2\,(n+\theta)}\quad\quad\text{a.s.}
\end{gather*}
This example is related to \cite{BDPR2021}, \cite{DALAL} and \cite{HOZA}. We will take up it again in forthcoming Example \ref{w35b8uh6}.
\end{ex}

\subsection{Sethuraman's representation and conjugacy for $\mathcal{L}_0$}\label{c4rd5}

Before going on, a few basic properties of $\mathcal{L}_0$ are to be recalled.

\vspace{0.2cm}

A {\em random probability measure} on $(S,\mathcal{B})$ is a measurable map $\mu:(\Omega,\mathcal{A})\rightarrow (\mathcal{P},\mathcal{C})$.

\vspace{0.2cm}

Let $X$ be exchangeable. Since $(S,\mathcal{B})$ is a standard space, there is a random probability measure $\mu$ on $(S,\mathcal{B})$ such that
\begin{gather*}
\mu(A)\overset{a.s.}=\lim_n\frac{1}{n}\,\sum_{i=1}^n1_A(X_i)\overset{a.s.}=\lim_nP\bigl(X_{n+1}\in A\mid\mathcal{F}_n\bigr)
\end{gather*}
for each fixed $A\in\mathcal{B}$. Moreover, $X$ is i.i.d. conditionally on $\mu$, in the sense that
\begin{gather*}
P\bigl(X\in B\mid\mu\bigr)=\mu^\infty(B)\quad\quad\text{a.s. for all }B\in\mathcal{B}^\infty
\end{gather*}
where $\mu^\infty=\mu\times\mu\times\ldots$; see e.g. \cite[p. 2090]{BPR2013}.

\vspace{0.2cm}

Suppose now that $X\in\mathcal{L}_0$ and define
\begin{gather*}
\mathcal{D}(C)=P(\mu\in C)\quad\quad\text{for all }C\in\mathcal{C}.
\end{gather*}
Such a $\mathcal{D}$ is a probability measure on $\mathcal{C}$, called the {\em Dirichlet prior}, and admits the following representation. Define a random probability measure $\mu^*$ on $(S,\mathcal{B})$ as
\begin{gather*}
\mu^*=\sum_jV_j\,\delta_{Z_j},
\end{gather*}
where $(Z_j)$ and $(V_j)$ are independent sequences, $(Z_j)$ is i.i.d. with $Z_1\sim\nu$, and $(V_j)$ has the stick-breaking distribution with parameter $\theta$; see Section \ref{m98f4}. Then,
\begin{gather*}
\mathcal{D}(C)=P(\mu^*\in C)\quad\quad\text{for all }C\in\mathcal{C}.
\end{gather*}
Thus, $\mathcal{D}$ can be also regarded as the probability distribution of $\mu^*$. This fact, proved by Sethuraman \cite{SET}, is fundamental in applications; see e.g. \cite{FLP}.

\vspace{0.2cm}

Finally, we recall the conjugacy property of $\mathcal{L}_0$. Write $\mathcal{D}(\lambda)$ (instead of $\mathcal{D}$) if $X\in\mathcal{L}_0$ has parameter $\lambda$. In this notation, if $X$ has parameter $\theta\nu$, then
\begin{gather*}
P(\mu\in C\mid\mathcal{F}_n)=\mathcal{D}\Bigl(\theta\nu+\sum_{i=1}^n\delta_{X_i}\Bigr)(C)\quad\quad\text{a.s. for all }C\in\mathcal{C}\text{ and }n\ge 1.
\end{gather*}
Roughly speaking, the posterior distribution of $\mu$ given $(X_1,\ldots,X_n)$ is still of the Dirichlet type but the parameter turns into $\theta\nu+\sum_{i=1}^n\delta_{X_i}$. Once again, this fact plays a basic role in applications.

\subsection{Our contribution} As claimed above, this paper aims to introduce and investigate the class $\mathcal{L}$.

\vspace{0.2cm}

Our first result is that conditions \eqref{ct6h}-\eqref{v56yh} suffice for exchangeability of $X$. Thus, each $X\in\mathcal{L}$ is a kernel based Dirichlet sequence, as defined in Subsection \ref{v5ty7}.

\vspace{0.2cm}

The next step is to develop some theory for $\mathcal{L}$. The obvious hope is that, at least to a certain extent, such a theory is parallel to that of $\mathcal{L}_0$. This is exactly the case. Essentially all main results concerning $\mathcal{L}_0$ extend nicely to $\mathcal{L}$. To illustrate, we assume $X\in\mathcal{L}$ and we mention a few facts.

\vspace{0.2cm}

\begin{itemize}

\item Up to replacing $\delta$ with $K$, the Sethuraman's representation remains exactly the same. Precisely, $P(\mu\in C)=P(\mu^*\in C)$ for all $C\in\mathcal{C}$, where
\begin{gather*}
\mu^*=\sum_jV_j\,K(Z_j)
\end{gather*}
and $(V_j)$ and $(Z_j)$ are as in Subsection \ref{c4rd5}.

\vspace{0.2cm}

\item The predictive distributions converge in total variation, that is
\begin{gather*}
\sup_{A\in\mathcal{B}}\,\Abs{P\bigl(X_{n+1}\in A\mid\mathcal{F}_n\bigr)-\mu(A)}\overset{a.s.}\longrightarrow 0\quad\quad\text{as }n\rightarrow\infty.
\end{gather*}

\vspace{0.2cm}

\item If $X\in\mathcal{L}_0$, it is well known that $\mu$ is a.s. discrete. This result extends to $\mathcal{L}$ as follows. Denote by $D_1$, $D_2$, $D_3$ the collections of elements of $\mathcal{P}$ which are, respectively, discrete, non-atomic, or absolutely continuous with respect to $\nu$. Then, for each  $1\le j\le 3$,
\begin{gather*}
P(\mu\in D_j)=1\quad\Leftrightarrow\quad K(x)\in D_j\text{ for }\nu\text{-almost all }x\in S.
\end{gather*}
Since $\delta_x\in D_1$ for all $x\in S$, the classical result is recovered. But now, with a suitable $K$, one obtains $P(\mu\in D_2)=1$ or $P(\mu\in D_3)=1$. This fact may be useful in applications.

\vspace{0.2cm}

\item The conjugacy property of $\mathcal{L}_0$ is still available. For each $n\ge 1$, let
\begin{gather*}
V^{(n)}=\bigl(V_j^{(n)}:j\ge 1\bigr)\quad\text{and}\quad Z^{(n)}=\bigl(Z_j^{(n)}:j\ge 1\bigr)
\end{gather*}
be two sequences such that

\vspace{0.2cm}

(i) $V^{(n)}$ and $Z^{(n)}$ are conditionally independent given $\mathcal{F}_n$;

(ii) $V^{(n)}$ has the stick-breaking distribution, with parameter $n+\theta$, conditionally on $\mathcal{F}_n$;

(iii) $Z^{(n)}$ is i.i.d., conditionally on $\mathcal{F}_n$, with
\begin{gather*}
P(Z_1^{(n)}\in\cdot\mid\mathcal{F}_n)=P\bigl(X_{n+1}\in\cdot\mid\mathcal{F}_n\bigr)=\frac{\theta\nu(\cdot)+\sum_{i=1}^nK(X_i)(\cdot)}{n+\theta}\quad\quad\text{a.s.}
\end{gather*}

\noindent Then,
\begin{gather*}
P(\mu\in\cdot\mid\mathcal{F}_n)=P(\mu^*_n\in\cdot\mid\mathcal{F}_n)
\end{gather*}
where
\begin{gather*}
\mu_n^*=\sum_jV_j^{(n)}\,K\bigl(Z_j^{(n)}\bigr).
\end{gather*}
Again, if $K=\delta$, this result reduces to the classical one.

\vspace{0.2cm}

\item A stable CLT holds true. Let $S=\mathbb{R}^p$ and $\int\norm{x}^2\,\nu(dx)<\infty$, where $\norm{\cdot}$ is the Euclidean norm. Suppose that $K$ has mean 0, in the sense that
\begin{gather*}
\int y_i\,K(x)(dy)=0\quad\quad\text{for all }x\in\mathbb{R}^p\text{ and }i=1,\ldots,p
\end{gather*}
where $y_i$ denotes the $i$-th coordinate of a point $y\in\mathbb{R}^p$. Then, $n^{-1/2}\sum_{i=1}^nX_i$ converges stably (in particular, in distribution) to the Gaussian kernel $\mathcal{N}_p(0,\Sigma)$, where $\Sigma$ is the (random) covariance matrix
\begin{gather*}
\Sigma=\left(\,\int y_i\,y_j\,\mu(dy):1\le i,\,j\le p\right).
\end{gather*}
Moreover, under some additional conditions, $n^{-1/2}\sum_{i=1}^nX_i$ converges in total variation as well.

\end{itemize}

\vspace{0.2cm}

This is a brief summary of our main results. Before closing the introduction, however, two remarks are in order.

\vspace{0.2cm}

First, to prove such results, we often exploit the fact that
\begin{gather}\label{y93eh5tg}
\bigl(K(X_n):n\ge 1\bigr)\quad\text{is a classical Dirichlet sequence with values in }(\mathcal{P},\mathcal{C}).
\end{gather}
Condition \eqref{y93eh5tg} is not surprising. We give a simple proof of it, based on predictive distributions, but condition \eqref{y93eh5tg} could be also obtained via some known results on $\mathcal{L}_0$.

\vspace{0.2cm}

Second, the above results are potentially useful in Bayesian nonparametrics. Define in fact
\begin{gather*}
\Pi(C)=P(\mu\in C)=P(\mu^*\in C)\quad\quad\text{for all }C\in\mathcal{C}.
\end{gather*}
Such a $\Pi$ is a new prior to be used in Bayesian nonparametrics. In real problems, working with $\Pi$ is as simple as working with the classical Dirichlet prior $\mathcal{D}$. In both cases, the posterior can be easily evaluated. Unlike $\mathcal{D}$, however, $\Pi$ can be chosen such that $\Pi(C)=1$ for some meaningful sets $C$ of probability measures. For instance, $C=D_j$ with $D_j$ defined as above for $j=1,2,3$. Or else, $C$ the set of invariant probability measures under a countable class of measurable transformations; see forthcoming Example \ref{w35b8uh6}. Finally, just because of its definition, $\mathcal{L}$ is particularly suitable in Bayesian predictive inference. And predicting future observations is one of the main tasks  of Bayesian nonparametrics.

\section{Preliminaries}\label{m98f4}

For all $\lambda\in\mathcal{P}$ and bounded measurable $f:S\rightarrow\mathbb{R}$, the notation $\lambda(f)$ stands for $\lambda(f)=\int f\,d\lambda$. Moreover, $\mathcal{N}_p(0,\Sigma)$ denotes the $p$-dimensional Gaussian law (on the Borel $\sigma$-field of $\mathbb{R}^p$) with mean 0 and covariance matrix $\Sigma$.

\vspace{0.2cm}

Let $\theta>0$ be a constant, $(W_n)$ an i.i.d. sequence with $W_1\sim$ beta$(1,\theta)$ and
\begin{gather*}
T_1=W_1,\quad T_n=W_n\prod_{i=1}^{n-1}(1-W_i)\text{ for }n>1.
\end{gather*}
A sequence $(V_n)$ of real random variables has the {\em stick-breaking distribution with parameter} $\theta$ if $(V_n)\sim (T_n)$. Note that $V_n>0$ for all $n$ and $\sum_nV_n=1$ a.s.

\vspace{0.2cm}

{\em Stable convergence} is a strong form of convergence in distribution. Let $N$ be a random probability measure on $(S,\mathcal{B})$. Then, $X_n$ {\em converges to $N$ stably} if
\begin{gather*}
E\bigl[N(f)\mid H\bigr]=\lim_nE\bigl[f(X_n)\mid H\bigr]
\end{gather*}
for all bounded continuous $f:S\rightarrow\mathbb{R}$ and all $H\in\mathcal{A}$ with $P(H)>0$. In particular, $X_n$ converges in distribution to the probability measure $A\mapsto E\bigl[N(A)\bigr]$.

\vspace{0.2cm}

We next report an useful characterization of exchangeability due to \cite{FLR2000}; see also \cite{BPR2012} and \cite{BDPR2021}. Let $\mathcal{F}_0=\{\emptyset,\Omega\}$ be the trivial $\sigma$-field and
\begin{gather*}
\sigma_n(x)=P\bigl[X_{n+1}\in\cdot\mid (X_1,\ldots,X_n)=x\bigr]\quad\quad\text{for all }x\in S^n.
\end{gather*}

\begin{thm} {\bf (\cite[Theorem 3.1]{FLR2000}).}\label{markov} The sequence $X$ is exchangeable if and only if
\begin{gather*}
P\bigl[(X_{n+1},X_{n+2})\in\cdot\mid\mathcal{F}_n\bigr]=P\bigl[(X_{n+2},X_{n+1})\in\cdot\mid\mathcal{F}_n\bigr]\quad\quad\text{a.s.}
\end{gather*}
for all $n\ge 0$ and
\begin{gather*}
\sigma_n(x)=\sigma_n(\pi(x))
\end{gather*}
for all $n\ge 2$, all permutations $\pi$ of $S^n$, and almost all $x\in S^n$. (Here, ``almost all" is with respect to the marginal distribution of $(X_1,\ldots,X_n)$).
\end{thm}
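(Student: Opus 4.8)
The plan is to prove both implications, treating the necessity of the two conditions as a disintegration exercise and the sufficiency as the substantive part, handled by induction on the number of coordinates.

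For necessity, assume $X$ is exchangeable. Condition 1 for a given $n$ comes from applying exchangeability to the single transposition of the coordinates in positions $n+1$ and $n+2$: since this permutation fixes the first $n$ positions, equality in law of $(X_1,\ldots,X_n,X_{n+1},X_{n+2})$ and $(X_1,\ldots,X_n,X_{n+2},X_{n+1})$ forces the conditional law of $(X_{n+1},X_{n+2})$ given $\mathcal{F}_n$ to be symmetric; I would verify this on product sets $A\times B$ and extend by a monotone class argument. For condition 2, I would fix $n$ and a permutation $\pi$ of $(1,\ldots,n)$, extend it to a permutation $\widetilde\pi$ of $S^{n+1}$ fixing the last coordinate, and disintegrate the law of $(X_1,\ldots,X_{n+1})$ as $\mu_n(dx)\,\sigma_n(x)(dy)$, where $\mu_n$ is the law of $(X_1,\ldots,X_n)$. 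Exchangeability gives both $\mu_n\circ\pi^{-1}=\mu_n$ and $\widetilde\pi$-invariance of the joint law; comparing the two disintegrations and using that $\mathcal{B}$ is countably generated (as $(S,\mathcal{B})$ is standard) yields $\sigma_n(x)=\sigma_n(\pi(x))$ for $\mu_n$-almost all $x$.

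For sufficiency, I would show by induction on $m$ that $(X_1,\ldots,X_m)$ is exchangeable. The base case $m=2$ is exactly condition 1 at $n=0$. The inductive engine is the algebraic fact that the symmetric group $S_{m}$ is generated by the transposition $(m-1,m)$ together with the permutations fixing position $m$. So, writing $T_\pi$ for the coordinate permutation induced by $\pi$, it suffices to prove invariance of the law of $(X_1,\ldots,X_m)$ under these two families, after checking that the set of $\pi$ leaving the law invariant is a subgroup (closure follows from $T_\pi\circ T_\rho=T_{\rho\pi}$ together with the fact that $T_\rho(X)\sim X$ implies $T_\pi T_\rho(X)\sim T_\pi(X)$). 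Invariance under permuting the first $m-1$ coordinates follows from the inductive hypothesis (which makes $\mu_{m-1}$ invariant) combined with condition 2 at index $m-1$ (which makes $\sigma_{m-1}$ invariant $\mu_{m-1}$-a.s.), via the same disintegration identity used in the necessity proof, read in reverse. Invariance under the transposition $(m-1,m)$ is immediate from condition 1 at index $m-2$: conditionally on $\mathcal{F}_{m-2}$ the pair $(X_{m-1},X_m)$ is symmetric, and integrating out the first $m-2$ coordinates preserves the swap.

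I expect the main obstacle to be the careful bookkeeping of the almost-sure qualifiers in the disintegration step of the sufficiency proof. Condition 2 only gives $\sigma_{m-1}(x)=\sigma_{m-1}(\pi(x))$ outside a $\mu_{m-1}$-null set, and one must ensure that this exceptional set does not interfere when integrating against $\mu_{m-1}$ to recover the full joint law; here the invariance $\mu_{m-1}\circ\pi^{-1}=\mu_{m-1}$ supplied by the inductive hypothesis is precisely what guarantees that the null set remains negligible after the change of variables. The group-theoretic reduction and the use of condition 1 are comparatively routine once the disintegration identity is set up cleanly.
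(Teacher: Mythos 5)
The paper never proves this statement: it is imported as is from \cite[Theorem 3.1]{FLR2000} and used as a black box (in the proof of Theorem \ref{bn7}), so there is no internal proof to compare yours against; your proposal must stand on its own, and it does. The necessity half is the routine disintegration argument you describe: equality in law of $(X_1,\ldots,X_n,X_{n+1},X_{n+2})$ and its swapped version forces the two conditional laws given $\mathcal{F}_n$ to agree a.s.\ (product sets, a monotone class argument, and countable generation to pass from fixed sets to equality as measures), while comparing the two disintegrations of the law of $(X_1,\ldots,X_{n+1})$ under a permutation fixing the last coordinate, together with the $\pi$-invariance of $\mu_n$, yields $\sigma_n(x)=\sigma_n(\pi(x))$ for $\mu_n$-almost all $x$. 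The sufficiency half is also sound: the generation fact is correct (the permutations fixing position $m$ form a copy of the symmetric group on $m-1$ letters, and adding the transposition of positions $m-1,m$ gives all adjacent transpositions), invariance under that transposition follows by integrating condition (1) at index $m-2$ against $1_D(X_1,\ldots,X_{m-2})$, and invariance under permutations of the first $m-1$ coordinates follows from condition (2) at index $m-1$ combined with the inductive invariance of $\mu_{m-1}$ — which, as you correctly emphasize, is exactly what keeps the exceptional null set in condition (2) negligible after the change of variables. Two small points worth making explicit if you write this up: for $m=2$ the step degenerates to condition (1) at $n=0$ alone, consistent with condition (2) being assumed only for $n\ge 2$; and a nonempty subset of a finite group closed under composition is automatically a subgroup, so checking closure suffices. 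In short, you have produced a correct, self-contained proof of a result the paper only cites, presumably close in spirit to the original argument of \cite{FLR2000}.
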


We conclude this section with two technical lemmas. Let
\begin{gather*}
\sigma(K)=\bigl\{\bigl\{x\in S:K(x)\in C\bigr\}:C\in\mathcal{C}\bigr\}
\end{gather*}
be the $\sigma$-field over $S$ generated by the kernel $K$.

\begin{lem}\label{w3d5z5n9i}\textbf{(Lemma 10 of \cite{BR2007}).} Under condition \eqref{v56yh}, there is a set $F\in\sigma(K)$ such that $\nu(F)=1$ and
\begin{gather*}
K(x)(B)=\delta_x(B)\quad\quad\text{for all }B\in\sigma(K)\text{ and }x\in F.
\end{gather*}
\end{lem}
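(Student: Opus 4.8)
The plan is to reduce the assertion to a one-set statement, prove it $\nu$-a.s.\ for each fixed $B$, and then manufacture a single exceptional null set by exploiting that $\sigma(K)$ is countably generated. The one-set computation is immediate from \eqref{v56yh}: each map $x\mapsto K(x)(A)$ is $\mathcal{G}$-measurable, and since these maps generate $\sigma(K)$ we get $\sigma(K)\subseteq\mathcal{G}$. Hence for any fixed $B\in\sigma(K)$ the indicator $1_B$ is $\mathcal{G}$-measurable, and the r.c.d.\ characterization $K(\cdot)(B)=E_\nu(1_B\mid\mathcal{G})$ $\nu$-a.s.\ collapses to $K(x)(B)=1_B(x)=\delta_x(B)$ for $\nu$-almost every $x$. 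The entire remaining task is to make this exceptional null set independent of $B$.

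Next I would invoke countable generation. Since $S$ is Polish, $\mathcal{B}$ has a countable generating family $(A_m)$, so $\mathcal{C}$ is generated by the countable family of evaluations $p\mapsto p(A_m)$, and therefore $\sigma(K)$ is generated by the countable family of $\sigma(K)$-measurable functions $x\mapsto K(x)(A_m)$. Let $\{B_n\}$ be the (countable) algebra generated by the preimages $\{x:K(x)(A_m)\in J\}$ with $J$ ranging over intervals with rational endpoints. This is a countable $\pi$-system, and $\sigma(\{B_n\})=\sigma(K)$; moreover every $B_n$ lies in $\sigma(K)$.

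Now for each $n$ I set $F_n=\{x:K(x)(B_n)=1_{B_n}(x)\}$. Both $x\mapsto K(x)(B_n)$ and $1_{B_n}$ are $\sigma(K)$-measurable (the latter because $B_n\in\sigma(K)$), so $F_n\in\sigma(K)$, and by the single-set computation $\nu(F_n)=1$. Putting $F=\bigcap_n F_n$ gives $F\in\sigma(K)$ with $\nu(F)=1$. Fixing any $x\in F$, the two probability measures $B\mapsto K(x)(B)$ and $B\mapsto\delta_x(B)$ on $(S,\sigma(K))$ agree on the generating $\pi$-system $\{B_n\}$, hence agree on all of $\sigma(K)$ by the uniqueness theorem for measures (Dynkin's $\pi$--$\lambda$ argument). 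This yields $K(x)(B)=\delta_x(B)$ for all $B\in\sigma(K)$ and all $x\in F$, as required.

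I expect the main obstacle to be precisely this passage from ``for each $B$, $\nu$-a.s.'' to ``$\nu$-a.s., for all $B$'': the pointwise identity is trivial for a fixed $B$ but is delivered only off a $B$-dependent null set. The real content is therefore (i) checking that $\sigma(K)$ is countably generated, so that only countably many exceptional sets arise and their union is still $\nu$-null, and (ii) verifying that the sets $F_n$ genuinely belong to $\sigma(K)$ rather than merely to $\mathcal{G}$, which is what allows the resulting $F$ to be $\sigma(K)$-measurable as the statement demands.
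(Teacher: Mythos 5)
Your proof is correct, but it takes a genuinely different route from the paper's. The paper works with the atoms of $\sigma(K)$, namely the sets $B(x)=\{y\in S:K(y)=K(x)\}$: it invokes Lemma 10 of \cite{BR2007} to produce a single set $F\in\sigma(K)$ with $\nu(F)=1$ on which $K(x)\bigl(B(x)\bigr)=1$, and then, since every $B\in\sigma(K)$ is a union of atoms, it deduces $K(x)(B)=\delta_x(B)$ by applying this fact to $B$ or to $B^c$. That argument handles all (possibly uncountably many) $B\in\sigma(K)$ at once with no countability bookkeeping, but its analytic core --- concentration of $K(x)$ on the atom of $x$ --- is outsourced to the citation. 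Your argument is instead self-contained: you observe $\sigma(K)\subseteq\mathcal{G}$, so for each fixed $B\in\sigma(K)$ the r.c.d. identity $K(\cdot)(B)=E_\nu(1_B\mid\mathcal{G})$ collapses to $1_B$ $\nu$-a.s., and you then merge the $B$-dependent null sets via countable generation and Dynkin's $\pi$--$\lambda$ theorem. You also correctly isolate the two delicate points: $\sigma(K)$ is countably generated not because it sits inside $\mathcal{B}$ (sub-$\sigma$-fields of countably generated $\sigma$-fields need not be countably generated) but because it is the preimage under $K$ of the countably generated $\mathcal{C}$; and the sets $F_n$ lie in $\sigma(K)$, not merely in $\mathcal{G}$, because $x\mapsto K(x)(B_n)$ factors through $K$. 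One small point to tighten: for $\mathcal{C}$ to be generated by the evaluations $p\mapsto p(A_m)$ you should take $(A_m)$ to be a countable \emph{field} generating $\mathcal{B}$ (available since $S$ is Borel in a Polish space), since the monotone-class argument behind that claim needs closure under finite intersections; an arbitrary countable generating family would not do. With that adjustment your proof is a complete, elementary replacement for the paper's citation-based one; what the paper's approach buys in exchange is brevity and the atomic picture of $\sigma(K)$, whose flavour reappears in the proof of Lemma \ref{w5v7uj}.
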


\begin{proof} This is basically \cite[Lem. 10]{BR2007} but we give a proof to make the paper self-contained. The atoms of the $\sigma$-field $\sigma(K)$ are sets of the form
\begin{gather*}
B(x)=\bigl\{y\in S:K(y)=K(x)\bigr\}\quad\quad\text{for all }x\in S.
\end{gather*}
Hence, each $B\in\sigma(K)$ can be written as
\begin{gather*}
B=\bigcup_{x\in B}B(x).
\end{gather*}
Moreover, by \cite[Lem. 10]{BR2007}, there is a set $F\in\sigma(K)$ such that $\nu(F)=1$ and
\begin{gather*}
K(x)\bigl(B(x)\bigr)=1\quad\quad\text{for all }x\in F.
\end{gather*}
Having noted these facts, fix $x\in F$ and $B\in\sigma(K)$. If $x\in B$, then
\begin{gather*}
K(x)(B)\ge K(x)\bigl(B(x)\bigr)=1.
\end{gather*}
If $x\notin B$, since $B^c\in\sigma(K)$, then $K(x)(B)=1-K(x)(B^c)=0$. Hence, $K(x)(B)=\delta_x(B)$.
\end{proof}

\medskip

\begin{lem}\label{w5v7uj}
Under condition \eqref{v56yh}, there is a set $F\in\sigma(K)$ such that $\nu(F)=1$ and
\begin{gather*}
\int_AK(y)(B)\,K(x)(dy)=K(x)(A)\,K(x)(B)\quad\quad\text{for all }x\in F\text{ and }A,\,B\in\mathcal{B}.
\end{gather*}
Moreover,
\begin{gather*}
\int_AK(y)(B)\,\nu(dy)=\int_BK(y)(A)\,\nu(dy)\quad\quad\text{for all }A,\,B\in\mathcal{B}.
\end{gather*}
\end{lem}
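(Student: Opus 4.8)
The plan is to treat the two assertions separately, deriving the first (the product identity) from Lemma \ref{w3d5z5n9i} and the second (the symmetry identity) from the self-adjointness of $\nu$-conditional expectation given $\mathcal{G}$. For the first identity I would take exactly the set $F\in\sigma(K)$ furnished by Lemma \ref{w3d5z5n9i}, so that $K(x)(C)=\delta_x(C)$ for all $C\in\sigma(K)$ and all $x\in F$; crucially this $F$ is selected once and for all, before any choice of $A$ and $B$.

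The heart of the first part is the observation that, for each fixed $B\in\mathcal{B}$, the map $y\mapsto K(y)(B)$ is $\sigma(K)$-measurable, being the composition of $K:(S,\mathcal{B})\to(\mathcal{P},\mathcal{C})$ with the evaluation $p\mapsto p(B)$ (which generates $\mathcal{C}$). Fixing $x\in F$ and $B\in\mathcal{B}$, set $c=K(x)(B)$ and $E=\{y\in S: K(y)(B)=c\}$. Then $E\in\sigma(K)$ and $x\in E$, so Lemma \ref{w3d5z5n9i} gives $K(x)(E)=\delta_x(E)=1$. Hence $y\mapsto K(y)(B)$ equals the constant $c$ for $K(x)$-almost all $y$, and for every $A\in\mathcal{B}$,
\begin{gather*}
\int_A K(y)(B)\,K(x)(dy)=\int_{A\cap E} c\,K(x)(dy)=c\,K(x)(A\cap E)=K(x)(B)\,K(x)(A),
\end{gather*}
where the last equality uses $K(x)(E)=1$, whence $K(x)(A\cap E)=K(x)(A)$. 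Since $F$ does not depend on $A,B$, this establishes the first identity for all $x\in F$ and all $A,B\in\mathcal{B}$ simultaneously.

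For the second identity I would invoke the equivalent form of condition \eqref{v56yh}, namely $K(\cdot)(A)=E_\nu(1_A\mid\mathcal{G})$ $\nu$-a.s. for every $A\in\mathcal{B}$. Writing $\int_A K(y)(B)\,\nu(dy)=\int_S 1_A\,E_\nu(1_B\mid\mathcal{G})\,d\nu$ and using that $E_\nu(1_B\mid\mathcal{G})$ is $\mathcal{G}$-measurable, the defining property of conditional expectation (pulling out the $\mathcal{G}$-measurable factor) yields
\begin{gather*}
\int_A K(y)(B)\,\nu(dy)=\int_S E_\nu(1_A\mid\mathcal{G})\,E_\nu(1_B\mid\mathcal{G})\,d\nu.
\end{gather*}
The right-hand side is symmetric in $A$ and $B$, so exchanging their roles reproduces $\int_B K(y)(A)\,\nu(dy)$, and the two integrals coincide.

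I do not expect a serious obstacle here; the work is essentially bookkeeping. The one point that warrants care is the uniformity in $A,B$ of the exceptional set in the first identity: this is guaranteed precisely because $F$ arises from Lemma \ref{w3d5z5n9i} prior to fixing $A,B$, while the $K(x)$-null set $E^c$ on which $K(y)(B)$ may differ from $c$ is absorbed inside the integral rather than discarded as a further $\nu$-null set. A secondary point to verify is that the level set $E$ genuinely lies in $\sigma(K)$, which is immediate from the $\sigma(K)$-measurability of $y\mapsto K(y)(B)$ noted above.
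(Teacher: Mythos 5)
Your proposal is correct and follows essentially the same route as the paper: the first identity via the level set $E=\{y:K(y)(B)=K(x)(B)\}\in\sigma(K)$ (the paper calls it $G$) combined with Lemma \ref{w3d5z5n9i}, and the second via the identification $K(\cdot)(A)=E_\nu(1_A\mid\mathcal{G})$ $\nu$-a.s. together with the self-adjointness of conditional expectation. The only cosmetic difference is that you write out the symmetric middle expression $\int E_\nu(1_A\mid\mathcal{G})\,E_\nu(1_B\mid\mathcal{G})\,d\nu$ explicitly, which the paper leaves implicit in passing from $\int_A E_\nu(1_B\mid\mathcal{G})\,d\nu$ to $\int_B E_\nu(1_A\mid\mathcal{G})\,d\nu$.
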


\begin{proof}
Let $F$ be as in Lemma \ref{w3d5z5n9i}. Fix $x\in F$ and $A,\,B\in\mathcal{B}$. Define
\begin{gather*}
G=\bigl\{y\in S:K(y)(B)=K(x)(B)\bigr\}
\end{gather*}
and note that $x\in G$ and $G\in\sigma(K)$. Since $x\in G$, then $\delta_x(G)=1$. Since $G\in\sigma(K)$ and $x\in F$, Lemma \ref{w3d5z5n9i} implies
\begin{gather*}
K(x)(G)=\delta_x(G)=1.
\end{gather*}
Therefore,
\begin{gather*}
\int_AK(y)(B)\,K(x)(dy)=K(x)(B)\,\int_A\,K(x)(dy)=K(x)(A)\,K(x)(B).
\end{gather*}
Finally,
\begin{gather*}
\int_AK(y)(B)\,\nu(dy)=\int_A E_\nu(1_B\mid\mathcal{G})\,d\nu=\int_B E_\nu(1_A\mid\mathcal{G})\,d\nu=\int_BK(y)(A)\,\nu(dy).
\end{gather*}
\end{proof}

\section{Results}\label{r1e2s3}

Recall that $\mathcal{L}$ is the class of sequences satisfying conditions \eqref{ct6h}-\eqref{v56yh} for some $\nu\in\mathcal{P}$ and some constant $\theta>0$. In this section, $X\in\mathcal{L}$ and $\mu$ is a random probability measure on $(S,\mathcal{B})$ such that
\begin{gather*}
\mu(A)\overset{a.s.}=\lim_n\frac{1}{n}\,\sum_{i=1}^n1_A(X_i)\overset{a.s.}=\lim_nP\bigl(X_{n+1}\in A\mid\mathcal{F}_n\bigr)\quad\quad\text{for all }A\in\mathcal{B}.
\end{gather*}
Existence of $\mu$ depends on $X$ is exchangeable and $(S,\mathcal{B})$ is a standard space; see Subsection \ref{c4rd5}.

\vspace{0.2cm}

Our starting point is the following.

\begin{thm}\label{bn7}
Under condition \eqref{ct6h}, $X$ is exchangeable if and only if
\begin{gather}\label{d5yh8q}
\int_AK(y)(B)\,\nu(dy)=\int_BK(y)(A)\,\nu(dy)
\end{gather}
and
\begin{gather}\label{q2a3x}
\int_AK(y)(B)\,K(x)(dy)=\int_BK(y)(A)\,K(x)(dy)
\end{gather}
for all $A,\,B\in\mathcal{B}$ and $\nu$-almost all $x\in S$. In particular, $X$ is exchangeable whenever $X\in\mathcal{L}$ (because of Lemma \ref{w5v7uj}).
\end{thm}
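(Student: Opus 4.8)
The plan is to apply the exchangeability criterion of Theorem \ref{markov}, which under \eqref{ct6h} splits into two requirements. The second requirement, namely $\sigma_n(x)=\sigma_n(\pi(x))$, is immediate: by \eqref{ct6h} a version of $\sigma_n$ is $\sigma_n(x_1,\ldots,x_n)=(n+\theta)^{-1}\bigl(\theta\nu+\sum_{i=1}^n K(x_i)\bigr)$, and $\sum_{i=1}^n K(x_i)$ is symmetric in its arguments. Hence everything reduces to the first requirement, the a.s. symmetry of the conditional law of $(X_{n+1},X_{n+2})$ given $\mathcal{F}_n$, and the heart of the argument is to compute this law explicitly.

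To that end, writing $M_n(\cdot)=\theta\nu(\cdot)+\sum_{i=1}^n K(X_i)(\cdot)=(n+\theta)\,P(X_{n+1}\in\cdot\mid\mathcal{F}_n)$, I would use the tower property together with \eqref{ct6h} at stages $n$ and $n+1$ to obtain, for product sets $A\times B$,
\[
P\bigl[(X_{n+1},X_{n+2})\in A\times B\mid\mathcal{F}_n\bigr]=\frac{M_n(A)\,M_n(B)+\theta\int_A K(y)(B)\,\nu(dy)+\sum_{i=1}^n\int_A K(y)(B)\,K(X_i)(dy)}{(n+1+\theta)(n+\theta)}.
\]
Interchanging $A$ and $B$ yields the conditional law of $(X_{n+2},X_{n+1})$; the term $M_n(A)M_n(B)$ is symmetric and cancels, so the first requirement of Theorem \ref{markov} becomes equivalent to
\[
\theta\Bigl(\int_A K(y)(B)\,\nu(dy)-\int_B K(y)(A)\,\nu(dy)\Bigr)+\sum_{i=1}^n\Bigl(\int_A K(y)(B)\,K(X_i)(dy)-\int_B K(y)(A)\,K(X_i)(dy)\Bigr)=0
\]
holding a.s., for all $A,B\in\mathcal{B}$ and all $n\ge 0$.

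For the ``only if'' direction I would specialize this identity. At $n=0$ the sum is empty and, since $X_1\sim\nu$, what survives is exactly \eqref{d5yh8q}. At $n=1$, invoking \eqref{d5yh8q} to kill the $\theta$-term and again using $X_1\sim\nu$, the single remaining summand forces $\int_A K(y)(B)\,K(x)(dy)=\int_B K(y)(A)\,K(x)(dy)$ for $\nu$-almost all $x$, i.e. \eqref{q2a3x}. For the ``if'' direction, I first note that taking $B=S$ in \eqref{d5yh8q} gives $\int K(y)(A)\,\nu(dy)=\nu(A)$, whence an easy induction through \eqref{ct6h} shows $X_n\sim\nu$ for every $n$. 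Then \eqref{d5yh8q} annihilates the $\theta$-term, while \eqref{q2a3x} says the map $x\mapsto\int_A K(y)(B)\,K(x)(dy)-\int_B K(y)(A)\,K(x)(dy)$ vanishes off a $\nu$-null set; since $X_i\sim\nu$, each summand is a.s. $0$, so the displayed identity holds a.s. To upgrade ``equality for each product set $A\times B$'' to a.s. equality of the two conditional laws as measures, I would fix a countable algebra generating $\mathcal{B}$ (available since $(S,\mathcal{B})$ is standard), take the common exceptional null set over this countable family, and extend by a uniqueness/monotone-class argument. The ``in particular'' clause then follows from Lemma \ref{w5v7uj}: its second display is precisely \eqref{d5yh8q}, and its first display gives $\int_A K(y)(B)\,K(x)(dy)=K(x)(A)K(x)(B)$ for $\nu$-a.a. $x$, an expression symmetric in $A$ and $B$, which is \eqref{q2a3x}.

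I expect the only genuinely delicate point to be this last measure-theoretic bookkeeping --- producing a single null set valid simultaneously for all $A,B$ and for the measure-level (rather than set-level) statement. The conditional-expectation computation is routine once the tower property is in place, and the equivalence with \eqref{d5yh8q}--\eqref{q2a3x} is then transparent.
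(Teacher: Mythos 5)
Your proposal is correct and follows essentially the same route as the paper: both reduce to the criterion of Theorem \ref{markov}, compute the two-step predictive distribution so that the symmetric term $M_n(A)M_n(B)$ cancels, establish $X_n\sim\nu$ by induction using \eqref{d5yh8q} with $B=S$, and extract \eqref{d5yh8q} and \eqref{q2a3x} from the cases $n=0$ and $n=1$ respectively. The only differences are organizational---you work with a single formula valid for all $n$ where the paper treats $n=0$, $n=1$ and general $n$ separately, and you are somewhat more explicit than the paper about upgrading the product-set identities to a.s. equality of the conditional laws via a countable generating class.
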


\begin{proof}
For all $A,\,B\in\mathcal{B}$, condition \eqref{ct6h} implies
\begin{gather*}
P(X_1\in A,\,X_2\in B)=E\Bigl\{1_A(X_1)\,P(X_2\in B\mid\mathcal{F}_1)\Bigr\}
\\=E\Bigl\{1_A(X_1)\,\frac{\theta\nu(B)+K(X_1)(B)}{1+\theta}\Bigr\}
\\=\frac{\theta}{1+\theta}\,\nu(B)\,\nu(A)\,+\,\frac{1}{1+\theta}\,\int_AK(y)(B)\,\nu(dy).
\end{gather*}
Therefore,
\begin{gather*}
\text{condition \eqref{d5yh8q}}\quad\Longleftrightarrow\quad (X_1,X_2)\sim (X_2,X_1).
\end{gather*}
Similarly, under \eqref{ct6h}, one obtains
\begin{gather*}
P\bigl(X_2\in A,\,X_3\in B\mid\mathcal{F}_1\bigr)=E\Bigl\{1_A(X_2)\,P(X_3\in B\mid\mathcal{F}_2)\mid\mathcal{F}_1\Bigr\}
\\=E\Bigl\{1_A(X_2)\,\frac{\theta\nu(B)+K(X_1)(B)+K(X_2)(B)}{2+\theta}\mid\mathcal{F}_1\Bigr\}
\\=\frac{1+\theta}{2+\theta}\,P(X_2\in B\mid\mathcal{F}_1)\,P(X_2\in A\mid\mathcal{F}_1)\,+\,\frac{1}{2+\theta}\,E\Bigl\{1_A(X_2)\,K(X_2)(B)\mid\mathcal{F}_1\Bigr\}\quad\text{a.s.}
\end{gather*}
and
\begin{gather*}
E\Bigl\{1_A(X_2)\,K(X_2)(B)\mid\mathcal{F}_1\Bigr\}=\frac{\theta}{1+\theta}\,\int_AK(y)(B)\,\nu(dy)+\frac{1}{1+\theta}\,\int_AK(y)(B)\,K(X_1)(dy)\quad\text{a.s.}
\end{gather*}

\vspace{0.2cm}

Next, if $X$ is exchangeable, condition \eqref{d5yh8q} follows from $(X_1,X_2)\sim (X_2,X_1)$. Moreover, $P\bigl(X_2\in A,\,X_3\in B\mid\mathcal{F}_1\bigr)=P\bigl(X_2\in B,\,X_3\in A\mid\mathcal{F}_1\bigr)$ a.s. implies
\begin{gather*}
E\Bigl\{1_A(X_2)\,K(X_2)(B)\mid\mathcal{F}_1\Bigr\}=E\Bigl\{1_B(X_2)\,K(X_2)(A)\mid\mathcal{F}_1\Bigr\}\quad\text{a.s.}
\end{gather*}
Therefore, \eqref{q2a3x} follows from \eqref{d5yh8q} and the above condition.

\vspace{0.2cm}

Conversely, assume conditions \eqref{d5yh8q}-\eqref{q2a3x}. Define
\begin{gather*}
\sigma_n(x)=\frac{\theta\nu+\sum_{i=1}^nK(x_i)}{n+\theta}\quad\quad\text{for all }n\ge 1\text{ and }x=(x_1,\ldots,x_n)\in S^n.
\end{gather*}
By \eqref{ct6h}, $P(X_{n+1}\in\cdot\mid\mathcal{F}_n)=\sigma_n(X_1,\ldots,X_n)$ a.s. Moreover, $\sigma_n(x)=\sigma_n(\pi(x))$ for all $n\ge 2$, all permutations $\pi$ of $S^n$ and all $x\in S^n$. Hence, by Theorem \ref{markov}, it suffices to show that
\begin{gather*}
P\bigl[(X_{n+1},X_{n+2})\in\cdot\mid\mathcal{F}_n\bigr]=P\bigl[(X_{n+2},X_{n+1})\in\cdot\mid\mathcal{F}_n\bigr]\quad\quad\text{a.s. for all }n\ge 0.
\end{gather*}
For $n=0$, the above condition is equivalent to \eqref{d5yh8q} (recall that $\mathcal{F}_0$ is the trivial $\sigma$-field). Therefore, it is enough to show that
\begin{gather}\label{iu7yt5r}
\int_A\sigma_{n+1}(x,y)(B)\,\sigma_n(x)(dy)=\int_B\sigma_{n+1}(x,y)(A)\,\sigma_n(x)(dy)
\end{gather}
for all $n\ge 1$, all $A,\,B\in\mathcal{B}$ and almost all $x\in S^n$ (where ``almost all" refers to the marginal distribution of $(X_1,\ldots,X_n)$).

Fix $m\ge 1$ and $A\in\mathcal{B}$. If $X_i\sim\nu$ for $i=1,\ldots,m$, then
\begin{gather*}
E\Bigl\{K(X_i)(A)\Bigr\}=\int K(y)(A)\,\nu(dy)=\nu(A)\quad\text{for }i=1,\ldots,m,
\end{gather*}
where the second equality is by \eqref{d5yh8q} (applied with $B=S$). Hence,
\begin{gather*}
P(X_{m+1}\in A)=E\Bigl\{P(X_{m+1}\in A\mid\mathcal{F}_m)\Bigr\}=\frac{\theta\nu(A)}{m+\theta}+\frac{\sum_{i=1}^mE\Bigl\{K(X_i)(A)\Bigr\}}{m+\theta}=\nu(A).
\end{gather*}
By induction, it follows that $X_i\sim\nu$ for all $i\ge 1$.

Finally, fix $n\ge 1$ and $A,\,B\in\mathcal{B}$. By \eqref{q2a3x}, there is a set $M\in\mathcal{B}$ such that $\nu(M)=1$ and
\begin{gather*}
\int_AK(y)(B)\,K(x)(dy)=\int_BK(y)(A)\,K(x)(dy)\quad\quad\text{for all }x\in M.
\end{gather*}
Thanks to this fact and condition \eqref{d5yh8q}, if $x=(x_1,\ldots,x_n)\in M^n$, one obtains
\begin{gather*}
\int_AK(y)(B)\,\sigma_n(x)(dy)=\frac{\theta\,\int_AK(y)(B)\,\nu(dy)+\sum_{i=1}^n\int_AK(y)(B)\,K(x_i)(dy)}{n+\theta}
\\=\frac{\theta\,\int_BK(y)(A)\,\nu(dy)+\sum_{i=1}^n\int_BK(y)(A)\,K(x_i)(dy)}{n+\theta}=\int_BK(y)(A)\,\sigma_n(x)(dy).
\end{gather*}
It follows that
\begin{gather*}
\int_A\sigma_{n+1}(x,y)(B)\,\sigma_n(x)(dy)=\int_A\,\frac{\theta\nu(B)+\sum_{i=1}^nK(x_i)(B)+K(y)(B)}{n+1+\theta}\,\sigma_n(x)(dy)
\\=\frac{n+\theta}{n+1+\theta}\,\sigma_n(x)(B)\,\sigma_n(x)(A)\,+\,\frac{\int_AK(y)(B)\,\sigma_n(x)(dy)}{n+1+\theta}
\\=\frac{n+\theta}{n+1+\theta}\,\sigma_n(x)(B)\,\sigma_n(x)(A)\,+\,\frac{\int_BK(y)(A)\,\sigma_n(x)(dy)}{n+1+\theta}
\\=\int_B\sigma_{n+1}(x,y)(A)\,\sigma_n(x)(dy).
\end{gather*}
Therefore, equation \eqref{iu7yt5r} holds for each $x\in M^n$. To conclude the proof, it suffices to note that, since $\nu(M)=1$ and $X_i\sim\nu$ for all $i$,
\begin{gather*}
P\bigl((X_1,\ldots,X_n)\in M^n\bigr)=1.
\end{gather*}
\end{proof}

\vspace{0.2cm}

In view of Theorem \ref{bn7}, $X$ is a kernel based Dirichlet sequence, as defined in Subsection \ref{v5ty7}, if and only if conditions \eqref{ct6h} and \eqref{d5yh8q}-\eqref{q2a3x} hold. Since \eqref{v56yh} $\Rightarrow$ \eqref{d5yh8q}-\eqref{q2a3x} (because of Lemma \ref{w5v7uj}), a sufficient condition for $X$ to be a kernel based Dirichlet sequence is that $X\in\mathcal{L}$. We do not know whether \eqref{d5yh8q}-\eqref{q2a3x} $\Rightarrow$ \eqref{v56yh}. In the sequel, however, we always assume $X\in\mathcal{L}$, namely, we always assume conditions \eqref{ct6h}-\eqref{v56yh}.

\vspace{0.2cm}

The next step is to develop some theory for $\mathcal{L}$. To this end, the following result is useful.

\begin{thm}\label{aw4rfv8}
If $X\in\mathcal{L}$, the sequence $\bigl(K(X_n):n\ge 1\bigr)$ is a Dirichlet sequence with values in $(\mathcal{P},\mathcal{C})$ and parameter the image measure $\theta\,\nu\circ K^{-1}$.
\end{thm}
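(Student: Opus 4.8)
The plan is to set $Y_n=K(X_n)$ and to check directly, from the definition of a Dirichlet (Polya) sequence, that $(Y_n)$ obeys the required prediction rule on $(\mathcal{P},\mathcal{C})$. Since $K$ is a kernel, each $Y_n$ is a genuine $(\mathcal{P},\mathcal{C})$-valued random element. Moreover, as $X\in\mathcal{L}$ is exchangeable with $X_1\sim\nu$, every $X_i$ has marginal law $\nu$; hence $Y_1=K(X_1)\sim\nu\circ K^{-1}$, which already pins down the candidate parameter $\theta\,\nu\circ K^{-1}$ (a finite measure on $\mathcal{C}$ of total mass $\theta$, because $\nu(K^{-1}(\mathcal{P}))=\nu(S)=1$).

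The heart of the argument is the prediction rule. Fix $C\in\mathcal{C}$ and write $K^{-1}(C)=\{x\in S:K(x)\in C\}$, which lies in $\sigma(K)\subseteq\mathcal{B}$. Then $\{Y_{n+1}\in C\}=\{X_{n+1}\in K^{-1}(C)\}$, so condition \eqref{ct6h} gives
\[
P(Y_{n+1}\in C\mid\mathcal{F}_n)=\frac{\theta\,\nu(K^{-1}(C))+\sum_{i=1}^n K(X_i)(K^{-1}(C))}{n+\theta}\quad\text{a.s.}
\]
The decisive step is to replace each summand $K(X_i)(K^{-1}(C))$ by $1_{K^{-1}(C)}(X_i)=\delta_{Y_i}(C)$. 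This is exactly what Lemma \ref{w3d5z5n9i} delivers: it supplies $F\in\sigma(K)$ with $\nu(F)=1$ on which $K(x)(B)=\delta_x(B)$ for every $B\in\sigma(K)$. Since $K^{-1}(C)\in\sigma(K)$ and each $X_i\sim\nu$ forces $X_i\in F$ a.s., we obtain $K(X_i)(K^{-1}(C))=\delta_{X_i}(K^{-1}(C))=1_C(Y_i)$ a.s., so the numerator collapses to $\theta\,(\nu\circ K^{-1})(C)+\sum_{i=1}^n\delta_{Y_i}(C)$.

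It then remains to pass from $\mathcal{F}_n$ to $\mathcal{G}_n:=\sigma(Y_1,\ldots,Y_n)$. Because $\mathcal{G}_n\subseteq\mathcal{F}_n$ and the resulting expression is $\mathcal{G}_n$-measurable, the tower property yields
\[
P(Y_{n+1}\in C\mid\mathcal{G}_n)=E\bigl[P(Y_{n+1}\in C\mid\mathcal{F}_n)\mid\mathcal{G}_n\bigr]=\frac{\theta\,(\nu\circ K^{-1})(C)+\sum_{i=1}^n\delta_{Y_i}(C)}{n+\theta}\quad\text{a.s.,}
\]
which is precisely the Polya prediction rule of a Dirichlet sequence on $(\mathcal{P},\mathcal{C})$ with parameter $\theta\,\nu\circ K^{-1}$.

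I expect the only genuine subtlety to be the step invoking Lemma \ref{w3d5z5n9i}: everything hinges on the collapse $K(X_i)(K^{-1}(C))=\delta_{X_i}(K^{-1}(C))$, which is valid precisely because the test set $K^{-1}(C)$ belongs to $\sigma(K)$ (and not merely to $\mathcal{B}$), and because the exceptional $\nu$-null set is avoided by the common marginal law $\nu$ of the $X_i$. Apart from this, the computation is the routine bookkeeping anticipated in the remark around \eqref{y93eh5tg}.
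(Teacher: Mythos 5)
Your proposal is correct and follows essentially the same route as the paper's proof: both hinge on Lemma \ref{w3d5z5n9i} to collapse $K(X_i)(B)$ to $\delta_{X_i}(B)$ for sets $B=K^{-1}(C)\in\sigma(K)$ (using that each $X_i\sim\nu$ avoids the exceptional null set), and both then pass from $\mathcal{F}_n$ to the smaller $\sigma$-field $\sigma\bigl(K(X_1),\ldots,K(X_n)\bigr)$ via a measurability/tower-property argument, since the collapsed predictive expression is measurable with respect to that $\sigma$-field. The only cosmetic difference is the order of these two steps, which is immaterial.
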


\begin{proof}
By Lemma \ref{w3d5z5n9i}, there is a set $F\in\sigma(K)$ such that
\begin{gather*}
\nu(F)=1\quad\text{and}\quad K(x)(B)=\delta_x(B)\quad\quad\text{for all }B\in\sigma(K)\text{ and }x\in F.
\end{gather*}
Since $P(X_n\in F)=\nu(F)=1$ for all $n$, it follows that
\begin{gather*}
P\bigl(X_{n+1}\in\ B\mid\mathcal{F}_n\bigr)=\frac{\theta\nu(B)+\sum_{i=1}^n\delta_{X_i}(B)}{n+\theta}\quad\quad\text{for all }B\in\sigma(K)\,\text{ a.s.}
\end{gather*}

Having noted this fact, define
\begin{gather*}
\mathcal{K}_n=\sigma\bigl[K(X_1),\ldots,K(X_n)\bigr].
\end{gather*}
Since $\mathcal{K}_n\subset\mathcal{F}_n$ and $P\bigl(X_{n+1}\in\cdot\mid\mathcal{F}_n\bigr)$ is $\mathcal{K}_n$-measurable,
\begin{gather*}
P\bigl(X_{n+1}\in\cdot\mid\mathcal{K}_n\bigr)=P\bigl(X_{n+1}\in\cdot\mid\mathcal{F}_n\bigr)\quad\quad\text{a.s.}
\end{gather*}
Finally, fix $C\in\mathcal{C}$ and define $B=\{K\in C\}$. Since $B\in\sigma(K)$, one obtains
\begin{gather*}
P\Bigl[K(X_{n+1})\in C\mid\mathcal{K}_n\Bigr]=P\Bigl(X_{n+1}\in B\mid\mathcal{K}_n\Bigr)=P\Bigl(X_{n+1}\in B\mid\mathcal{F}_n\Bigr)
\\=\frac{\theta\nu(B)+\sum_{i=1}^n\delta_{X_i}(B)}{n+\theta}=\frac{\theta\nu\circ K^{-1}(C)+\sum_{i=1}^n\delta_{K(X_i)}(C)}{n+\theta}\quad\quad\text{ a.s.}
\end{gather*}
\end{proof}

\vspace{0.2cm}

We next turn to a Sethuraman-like representation for $\mathcal{L}$. Let $\mu^*$ be the random probability measure on $(S,\mathcal{B})$ defined as
\begin{gather*}
\mu^*=\sum_jV_j\,K(Z_j),
\end{gather*}
where $(Z_j)$ and $(V_j)$ are independent sequences, $(Z_j)$ is i.i.d. with $Z_1\sim\nu$, and $(V_j)$ has the stick-breaking distribution with parameter $\theta$; see Section \ref{m98f4}.

\begin{thm}\label{t67v}
If $X\in\mathcal{L}$, then
\begin{gather*}
P(\mu\in C)=P(\mu^*\in C)\quad\quad\text{for all }C\in\mathcal{C}.
\end{gather*}
\end{thm}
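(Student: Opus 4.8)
The plan is to exploit Theorem~\ref{aw4rfv8}, which tells us that $\bigl(K(X_n):n\ge 1\bigr)$ is a classical Dirichlet sequence with values in $(\mathcal{P},\mathcal{C})$ and parameter $\theta\,\nu\circ K^{-1}$. First I would invoke the classical Sethuraman representation, recalled in Subsection~\ref{c4rd5}, applied to this $(\mathcal{P},\mathcal{C})$-valued Dirichlet sequence. Writing $Y_n=K(X_n)$, the empirical measures $\frac{1}{n}\sum_{i=1}^n\delta_{Y_i}$ converge weakly a.s.\ to a random probability measure, say $\mu_K$, on $(\mathcal{P},\mathcal{C})$, and Sethuraman's theorem identifies the law of $\mu_K$ with that of $\sum_j V_j\,\delta_{K(Z_j)}$, where $(V_j)$ is stick-breaking with parameter $\theta$ and $(Z_j)$ is i.i.d.\ $\nu$, independent of $(V_j)$. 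This is exactly the classical statement applied in the space $(\mathcal{P},\mathcal{C})$ rather than $(S,\mathcal{B})$.

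The next and central step is to connect $\mu_K$, a random measure on $(\mathcal{P},\mathcal{C})$, with $\mu$, the directing measure on $(S,\mathcal{B})$. The key observation is that for each fixed $A\in\mathcal{B}$ the evaluation map $p\mapsto p(A)$ is $\mathcal{C}$-measurable and bounded, so I would compute
\begin{gather*}
\mu(A)\overset{a.s.}=\lim_n\frac1n\sum_{i=1}^n P\bigl(X_{i+1}\in A\mid\mathcal{F}_i\bigr)\quad\text{versus}\quad \lim_n\frac1n\sum_{i=1}^n K(X_i)(A).
\end{gather*}
In fact the defining property of $\mu$ gives $\mu(A)\overset{a.s.}=\lim_n\frac1n\sum_{i=1}^n 1_A(X_i)$, and one checks (using condition~\eqref{d5yh8q} as in the proof of Theorem~\ref{bn7}, which yields $E\{K(X_i)(A)\}=\nu(A)$) that $\frac1n\sum_{i=1}^n 1_A(X_i)$ and $\frac1n\sum_{i=1}^n K(X_i)(A)$ have the same a.s.\ limit. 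Consequently $\mu(A)=\int_{\mathcal{P}} p(A)\,\mu_K(dp)$ a.s., i.e.\ $\mu$ is the barycenter (mean) of $\mu_K$. The same barycenter identity holds for $\mu^*$ relative to $\sum_j V_j\,\delta_{K(Z_j)}$, since $\mu^*=\sum_j V_j\,K(Z_j)=\int_{\mathcal{P}} p\,\bigl(\sum_j V_j\,\delta_{K(Z_j)}\bigr)(dp)$.

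Since $\mu_K$ and $\sum_j V_j\,\delta_{K(Z_j)}$ have the same law on $\mathcal{C}$, and since the barycenter map $m\mapsto \bigl(A\mapsto \int_{\mathcal{P}} p(A)\,m(dp)\bigr)$ is a fixed measurable transformation from random measures on $(\mathcal{P},\mathcal{C})$ to elements of $(\mathcal{P},\mathcal{C})$, the images $\mu$ and $\mu^*$ must share the same distribution, giving $P(\mu\in C)=P(\mu^*\in C)$ for all $C\in\mathcal{C}$.

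The main obstacle I anticipate is the measurability and well-definedness of the barycenter step: one must verify that the a.s.\ equality $\mu(A)=\int p(A)\,\mu_K(dp)$, which I can establish for each \emph{fixed} $A$, upgrades to an identification of $\mu$ as a $\mathcal{C}$-measurable function of $\mu_K$ holding simultaneously for all $A$. Because $(S,\mathcal{B})$ is standard, $\mathcal{C}$ is countably generated, so it suffices to control a countable separating class of sets $A$ and then extend by a monotone-class argument; this is routine but must be stated carefully to ensure the exceptional null set does not depend on $A$. Once the barycenter map is pinned down as a genuine measurable map, equality in distribution of $\mu_K$ and $\sum_j V_j\,\delta_{K(Z_j)}$ transfers immediately to $\mu$ and $\mu^*$.
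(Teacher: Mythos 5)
Your proposal is, at its core, the same argument as the paper's, just phrased in different language. The paper restricts $\mu$ and $\mu^*$ to the sub-$\sigma$-field $\sigma(K)\subset\mathcal{B}$ (which carries the same information as your $\mu_K$, since $\mu\bigl(\{K\in C\}\bigr)=\mu_K(C)$), invokes Sethuraman for the Dirichlet sequence $\bigl(K(X_n)\bigr)$ exactly as you do via Theorem \ref{aw4rfv8}, and then upgrades from $\sigma(K)$ to all of $\mathcal{B}$ through the identities $\mu(A)\overset{a.s.}=\int K(x)(A)\,\mu(dx)$ and $\mu^*(A)\overset{a.s.}=\int K(x)(A)\,\mu^*(dx)$ --- these are precisely your barycenter identities. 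The conclusion then follows from equality of the finite-dimensional distributions $\bigl(\mu(A_1),\ldots,\mu(A_k)\bigr)\sim\bigl(\mu^*(A_1),\ldots,\mu^*(A_k)\bigr)$, since $\mathcal{C}$ is generated by the evaluation maps; note that this also disposes of the measurability worry in your last paragraph: you never need to realize $\mu$ as a pointwise measurable function of $\mu_K$, only that $m\mapsto\int p(A)\,m(dp)$ is measurable for each fixed $A$, which is standard. A small dividend of your formulation: on the $\mu^*$ side the barycenter identity holds by definition of $\sum_jV_j\,\delta_{K(Z_j)}$, whereas the paper needs Lemma \ref{w5v7uj} at that point.

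There is, however, one justification that is a non sequitur as written. You claim that $\frac1n\sum_{i=1}^n1_A(X_i)$ and $\frac1n\sum_{i=1}^nK(X_i)(A)$ have the same a.s.\ limit ``using condition \eqref{d5yh8q}, which yields $E\{K(X_i)(A)\}=\nu(A)$''. Equality of expectations cannot give equality of a.s.\ limits: for an exchangeable sequence the a.s.\ limit of the averages is a conditional expectation given the tail $\sigma$-field, not the mean, so this argument proves nothing --- indeed, the assertion that the two limits coincide is exactly equivalent to the identity $\mu(A)\overset{a.s.}=\int K(x)(A)\,\mu(dx)$ that you are trying to establish. The correct, and immediate, justification is the one the paper uses: by the definition of $\mu$ recalled at the start of Section \ref{r1e2s3}, $\mu(A)\overset{a.s.}=\lim_nP\bigl(X_{n+1}\in A\mid\mathcal{F}_n\bigr)$, and by \eqref{ct6h}, $P\bigl(X_{n+1}\in A\mid\mathcal{F}_n\bigr)=\frac{\theta\nu(A)+\sum_{i=1}^nK(X_i)(A)}{n+\theta}$, whose limit coincides with that of $\frac1n\sum_{i=1}^nK(X_i)(A)$; hence $\mu(A)\overset{a.s.}=\lim_n\frac1n\sum_{i=1}^nK(X_i)(A)=\int p(A)\,\mu_K(dp)$. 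With this one-line repair your proof is complete and correct.
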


\begin{proof}
Let $\mu_0$ and $\mu^*_0$ be the restrictions of $\mu$ and $\mu^*$ on $\sigma(K)$. Then, $\mu_0\sim\mu^*_0$ by \cite{SET} and since $\bigl(K(X_n):n\ge 1\bigr)$ is a classical Dirichlet sequence. Hence,
\begin{gather*}
\Bigl(\mu(g_1),\ldots,\mu(g_k)\Bigr)\sim \Bigl(\mu^*(g_1),\ldots,\mu^*(g_k)\Bigr)
\end{gather*}
whenever $g_1,\ldots,g_k:S\rightarrow\mathbb{R}$ are bounded and $\sigma(K)$-measurable. In addition, for fixed $A\in\mathcal{B}$, one obtains
\begin{gather*}
\int K(x)(A)\,\mu(dx)=\lim_n\frac{\sum_{i=1}^nK(X_i)(A)}{n}=\lim_nP(X_{n+1}\in A\mid\mathcal{F}_n)=\mu(A)\quad\quad\text{a.s.}
\end{gather*}
Similarly, Lemma \ref{w5v7uj} (applied with $B=S$) implies
\begin{gather*}
\int K(x)(A)\,K(Z_j)(dx)=K(Z_j)(A)\quad\quad\text{a.s. for all }j\ge 1.
\end{gather*}
Thus,
\begin{gather*}
\int K(x)(A)\,\mu^*(dx)=\sum_jV_j\,\int K(x)(A)\,K(Z_j)(dx)
\\=\sum_jV_j\,K(Z_j)(A)=\mu^*(A)\quad\quad\text{a.s.}
\end{gather*}
Having noted these facts, fix $k\ge 1$, $A_1,\ldots,A_k\in\mathcal{B}$, and define $g_i(x)=K(x)(A_i)$ for all $x\in S$ and $i=1,\ldots,k$. Then,
\begin{gather*}
\Bigl(\mu(A_1),\ldots,\mu(A_k)\Bigr)\overset{a.s.}=\Bigl(\mu(g_1),\ldots,\mu(g_k)\Bigr)\,\sim\,\Bigl(\mu^*(g_1),\ldots,\mu^*(g_k)\Bigr)\overset{a.s.}=\Bigl(\mu^*(A_1),\ldots,\mu^*(A_k)\Bigr).
\end{gather*}
This concludes the proof.
\end{proof}

\vspace{0.2cm}

Theorem \ref{t67v} plays for $\mathcal{L}$ the same role played by \cite{SET} for $\mathcal{L}_0$. Among other things, it provides a simple way to approximate the probability distribution of $\mu$ and to obtain its posterior distribution; see forthcoming Theorem \ref{om55d} and its proof. For a further implication, define
\begin{gather*}
D_1=\bigl\{p\in\mathcal{P}:p\text{ discrete}\bigr\},\quad D_2=\bigl\{p\in\mathcal{P}:p\text{ non-atomic}\bigr\},\quad D_3=\bigl\{p\in\mathcal{P}:p\ll\nu\bigr\}.
\end{gather*}
Then, Theorem \ref{t67v} implies the following result.

\begin{thm}\label{i92xf}
If $j\in\{1,2,3\}$ and $X\in\mathcal{L}$, then $P(\mu\in D_j)\in\{0,1\}$ and
\begin{gather*}
P(\mu\in D_j)=1\quad\Leftrightarrow\quad K(x)\in D_j\text{ for }\nu\text{-almost all }x\in S.
\end{gather*}
In addition,
\begin{gather}\label{ki0z}
\sup_{A\in\mathcal{B}}\,\Abs{P\bigl(X_{n+1}\in A\mid\mathcal{F}_n\bigr)-\mu(A)}\overset{a.s.}\longrightarrow 0\quad\quad\text{as }n\rightarrow\infty.
\end{gather}
\end{thm}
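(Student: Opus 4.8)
I would treat the two assertions separately, drawing on the Sethuraman-type representation of Theorem \ref{t67v} for the statement about $P(\mu\in D_j)$ and on Theorem \ref{aw4rfv8}, which identifies $\bigl(K(X_n)\bigr)$ as a classical Dirichlet sequence, for the total-variation convergence \eqref{ki0z}. At the outset I would record the (standard) measurability of $D_1,D_2,D_3$ in $\mathcal{C}$ on a standard space, so that $\{\mu\in D_j\}$ and $\{x:K(x)\in D_j\}$ are events.

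For the first assertion, fix $D\in\{D_1,D_2,D_3\}$ and use Theorem \ref{t67v} to replace $\mu$ by $\mu^*=\sum_jV_jK(Z_j)$. The plan is to prove the pathwise identity $\{\mu^*\in D\}=\bigcap_{i}\{K(Z_i)\in D\}$, holding almost surely. The inclusion ``$\supseteq$'' is the stability of $D$ under countable convex combinations (immediate for $D_1$ and $D_3$; for $D_2$ one checks $\mu^*(\{x\})=\sum_iV_iK(Z_i)(\{x\})=0$). The inclusion ``$\subseteq$'' uses that $V_i>0$ and $\sum_iV_i=1$ a.s., so $V_iK(Z_i)\le\mu^*$ yields $K(Z_i)\ll\mu^*$, and a measure dominated by a discrete (resp.\ $\nu$-absolutely continuous) measure is of the same type, while non-atomicity of $\mu^*$ forces $K(Z_i)(\{x\})=0$. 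Once this identity is in hand, independence of the i.i.d.\ sequence $(Z_i)$ gives $P(\mu\in D)=P(\mu^*\in D)=\prod_ip=p^{\infty}$ with $p=\nu\{x:K(x)\in D\}$, which is $0$ or $1$ according as $p<1$ or $p=1$; this is exactly the claimed zero-one law and characterization.

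For \eqref{ki0z}, I would first invoke Theorem \ref{aw4rfv8} together with Sethuraman's theorem to record that the directing measure $\mu_K$ of $\bigl(K(X_n)\bigr)$ is a.s.\ discrete, say $\mu_K=\sum_hW_h\,\delta_{q_h}$, and that $\bigl(K(X_n)\bigr)$ is conditionally i.i.d.\ $\mu_K$. Consequently each $K(X_i)$ equals some atom $q_h$, and with $N_{n,h}=\#\{i\le n:K(X_i)=q_h\}$ the predictive distribution and the limit both live on the countable family $\{q_h\}$:
\[
P(X_{n+1}\in A\mid\mathcal{F}_n)=\frac{\theta\,\nu(A)}{n+\theta}+\sum_h\frac{N_{n,h}}{n+\theta}\,q_h(A),\qquad \mu(A)=\sum_hW_h\,q_h(A),
\]
the formula for $\mu(A)$ following from the conditional strong law applied to $p\mapsto p(A)$. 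Since $0\le q_h(A)\le 1$, subtracting and using the triangle inequality bounds the supremum over $A$ uniformly:
\[
\sup_{A\in\mathcal{B}}\bigl|P(X_{n+1}\in A\mid\mathcal{F}_n)-\mu(A)\bigr|\le\frac{\theta}{n+\theta}+\sum_h\Bigl|\frac{N_{n,h}}{n+\theta}-W_h\Bigr|.
\]

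The remaining task, which I expect to be the crux, is to drive the right-hand side to $0$ almost surely. The base term is $O(1/n)$, and switching the denominators from $n+\theta$ to $n$ costs at most $\frac{\theta}{n+\theta}$; so everything reduces to $\sum_h|N_{n,h}/n-W_h|\to 0$. For this I would apply the conditional strong law to get $N_{n,h}/n\to W_h$ a.s.\ for each $h$ and then Scheffé's lemma, which applies because $\sum_hN_{n,h}/n=1=\sum_hW_h$. The essential point is that discreteness of $\mu_K$ confines both the predictive distributions and $\mu$ to the countable set $\{q_h\}$, turning the uniform-in-$A$ (total-variation) control into an $\ell^1$ statement about occupation frequencies; this is the step that upgrades the pointwise convergence $P(X_{n+1}\in A\mid\mathcal{F}_n)\to\mu(A)$ to convergence in total variation.
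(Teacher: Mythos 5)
Your treatment of the zero-one law is essentially the paper's own argument: both rest on Theorem \ref{t67v} and on the pathwise identity $\{\mu^*\in D_j\}=\bigcap_i\{K(Z_i)\in D_j\}$ combined with independence of the i.i.d.\ sequence $(Z_i)$; the paper states the resulting probability computation ($a_j^n\to 0$ when $a_j<1$, probability $1$ when $a_j=1$) without spelling out the two inclusions, so your write-up merely makes its implicit step explicit. For \eqref{ki0z}, however, you take a genuinely different route. The paper sets $\lambda_n=\frac{1}{n}\sum_{i\le n}K(X_i)$, invokes the known total-variation convergence of empirical measures for classical Dirichlet sequences (Pitman, applied to $(K(X_n))$ via Theorem \ref{aw4rfv8}, which gives uniformity over $\sigma(K)$), and then lifts from $\sigma(K)$ to all of $\mathcal{B}$ through the identities $\int K(x)(A)\,\lambda_n(dx)=\lambda_n(A)$ and $\int K(x)(A)\,\mu(dx)=\mu(A)$ coming from Lemma \ref{w5v7uj}. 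You instead re-derive that Dirichlet total-variation result from scratch: a.s.\ discreteness of the directing measure $\mu_K$ of $(K(X_n))$, the conditional SLLN for the occupation frequencies $N_{n,h}/n\to W_h$, and Scheff\'e's lemma to upgrade to $\ell^1$ convergence; the role of the paper's lifting step is played by your mean-measure identity $\mu(A)=\sum_h W_h\,q_h(A)$. Your version is self-contained and elementary (no citation of Pitman's proposition, no use of Lemma \ref{w5v7uj} in this part), at the cost of bookkeeping with random atoms; the paper's version is shorter by citation.

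One point you should patch: the conditional SLLN yields $\mu(A)=\sum_h W_h\,q_h(A)$ only a.s.\ for each \emph{fixed} $A$, with the exceptional null set depending on $A$, whereas your final bound takes a supremum over uncountably many $A\in\mathcal{B}$. Since $\mu$ and $\sum_h W_h\,q_h(\cdot)$ are both random probability measures, it suffices to note that they agree a.s.\ on a countable field generating $\mathcal{B}$ and hence, by uniqueness of measures, are a.s.\ equal as measures; the paper carries out the analogous reduction explicitly with its countable field $\mathcal{B}_0$. Likewise, a measurable enumeration of the atom-weight pairs $(q_h,W_h)$ of $\mu_K$, and the measurability of $D_1,D_2,D_3$ in $\mathcal{C}$ which you flag at the outset, deserve one-line justifications (both are standard on a standard space). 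With these routine additions your argument is correct.
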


\begin{proof}
Fix $j\in\{1,2,3\}$ and define $a_j=\nu\{x:K(x)\in D_j\}$. If $a_j=1$, Theorem \ref{t67v} yields
\begin{gather*}
P(\mu\in D_j)=P(\mu^*\in D_j)=P\bigl(K(Z_i)\in D_j\text{ for all }i\ge 1\bigr)=1.
\end{gather*}
Similarly, if $a_j<1$,
\begin{gather*}
P(\mu\in D_j)\le P\bigl(K(Z_i)\in D_j\text{ for }1\le i\le n\bigr)=a_j^n\longrightarrow 0\quad\quad\text{as }n\rightarrow\infty.
\end{gather*}

It remains to prove \eqref{ki0z}. Define the random probability measure
\begin{gather*}
\lambda_n=\frac{1}{n}\,\sum_{i=1}^nK(X_i).
\end{gather*}
To prove \eqref{ki0z}, it is enough to show that $\lim_n\sup_{A\in\mathcal{B}}\,\abs{\lambda_n(A)-\mu(A)}\overset{a.s.}=0$ and this limit relation is actually true if $X\in\mathcal{L}_0$; see e.g. \cite[Prop. 11]{PIT1996}. Hence, since $\bigl(K(X_n):n\ge 1\bigr)$ is a classical Dirichlet sequence, one obtains
\begin{gather*}
\sup_{A\in\sigma(K)}\,\abs{\lambda_n(A)-\mu(A)}\overset{a.s.}\longrightarrow 0.
\end{gather*}
Now, we argue as in the proof of Theorem \ref{t67v}. Precisely, for each $A\in\mathcal{B}$, Lemma \ref{w5v7uj} (applied with $B=S$) yields
\begin{gather*}
\int K(x)(A)\,\lambda_n(dx)=\frac{1}{n}\,\sum_{i=1}^n\int K(x)(A)\,K(X_i)(dx)=\frac{1}{n}\,\sum_{i=1}^nK(X_i)(A)=\lambda_n(A)\quad\text{a.s.}
\end{gather*}
Similarly, $\int K(x)(A)\,\mu(dx)=\mu(A)$ a.s. Therefore, after fixing a countable field $\mathcal{B}_0$ such that $\mathcal{B}=\sigma(\mathcal{B}_0)$, one finally obtains
\begin{gather*}
\sup_{A\in\mathcal{B}}\,\abs{\lambda_n(A)-\mu(A)}=\sup_{A\in\mathcal{B}_0}\,\abs{\lambda_n(A)-\mu(A)}
\\\overset{a.s.}=\sup_{A\in\mathcal{B}_0}\,\Abs{\int K(x)(A)\,\lambda_n(dx)-\int K(x)(A)\,\mu(dx)}
\\\le\sup_{A\in\sigma(K)}\,\abs{\lambda_n(A)-\mu(A)}\overset{a.s.}\longrightarrow 0.
\end{gather*}
\end{proof}

\vspace{0.2cm}

It is worth noting that, for an arbitrary exchangeable sequence $X$, convergence in total variation of $P\bigl(X_{n+1}\in\cdot\mid\mathcal{F}_n\bigr)$ is not guaranteed; see e.g. \cite{BPR2013}.

\vspace{0.2cm}

A further consequence of Theorem \ref{t67v} is a stable CLT (stable convergence is briefly recalled in Section \ref{m98f4}). For each $y\in\mathbb{R}^p$, let $y_i$ denote the $i$-th coordinate of $y$.

\begin{thm}\label{c9mj5}
Let $S=\mathbb{R}^p$ and $X\in\mathcal{L}$. Suppose $\int\norm{x}^2\,\nu(dx)<\infty$, where $\norm{\cdot}$ is the Euclidean norm, and
\begin{gather*}
\int y_i\,K(x)(dy)=0\quad\quad\text{for all }x\in\mathbb{R}^p\text{ and }i=1,\ldots,p.
\end{gather*}
Then,
\begin{gather*}
\frac{\sum_{i=1}^nX_i}{\sqrt{n}}\,\overset{stably}\longrightarrow\,\mathcal{N}_p(0,\Sigma)\quad\quad\text{as }n\rightarrow\infty,
\end{gather*}
where $\Sigma$ is the random covariance matrix
\begin{gather*}
\Sigma=\left(\,\int y_i\,y_j\,\mu(dy):1\le i,\,j\le p\right).
\end{gather*}
\end{thm}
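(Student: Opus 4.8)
The plan is to reduce the statement, via de Finetti's theorem, to a conditional (pointwise-in-$\mu$) central limit theorem, and then upgrade the resulting convergence in distribution to genuine stable convergence by exploiting the conditional i.i.d. structure. The key preliminary observation is that the mean-zero hypothesis on $K$ forces $\mu$ itself to have mean zero, which is exactly what makes $\sqrt n$ the correct normalization.

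First I would record two moment facts about $\mu$, both obtained from the Sethuraman representation of Theorem \ref{t67v}, writing $\mu^*=\sum_j V_j\,K(Z_j)$. The r.c.d. identity $\int\bigl(\int h\,dK(x)\bigr)\,\nu(dx)=\int h\,d\nu$ (which follows from $\int K(x)(A)\,\nu(dx)=\nu(A)$ by monotone approximation) gives $E\bigl[\int\norm{y}^2\,\mu^*(dy)\bigr]=\sum_j E[V_j]\int\norm{y}^2\,d\nu=\int\norm{y}^2\,d\nu<\infty$, so that $\int\norm{y}^2\,\mu(dy)<\infty$ a.s. Moreover the hypothesis $\int y_i\,K(x)(dy)=0$ yields $\int y_i\,\mu^*(dy)=\sum_j V_j\int y_i\,K(Z_j)(dy)=0$ a.s. (the series converging absolutely, since its expected absolute value is $\int\abs{y_i}\,d\nu<\infty$), whence $\int y_i\,\mu(dy)=0$ a.s. for each $i$, these functionals being $\mathcal{C}$-measurable. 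Consequently $\Sigma$ is a.s. a finite covariance matrix and coincides with the second-moment matrix of $\mu$. Next I would invoke the fact recalled in Subsection \ref{c4rd5} that, conditionally on $\mu$, the sequence $X$ is i.i.d.\ with marginal $\mu$.

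Set $Y_n=n^{-1/2}\sum_{k=1}^n X_k$ and $\phi_\mu(s)=\int e^{is\cdot y}\,\mu(dy)$. The classical multivariate CLT applied pointwise in $\mu$ gives $\bigl(\phi_\mu(t/\sqrt n)\bigr)^n\to e^{-\frac12 t^\top\Sigma t}$ a.s., since $\mu$ has mean $0$ and finite covariance $\Sigma$ a.s. To obtain stable convergence I would use the characterization that $Y_n\to\mathcal N_p(0,\Sigma)$ stably iff $E\bigl[Z\,e^{it\cdot Y_n}\bigr]\to E\bigl[Z\,e^{-\frac12 t^\top\Sigma t}\bigr]$ for every $t\in\mathbb R^p$ and every bounded $Z$. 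Because both $Y_n$ and $\Sigma$ are measurable with respect to $\mathcal F_\infty:=\sigma(X_1,X_2,\dots)$, replacing $Z$ by $P(H\mid\mathcal F_\infty)$ shows that it suffices to test against bounded $\mathcal F_\infty$-measurable $Z$, and hence by approximation against $Z$ that is $\mathcal F_m$-measurable for some fixed $m$. For such a $Z$ I would split $\sum_{k=1}^n X_k=\sum_{k=1}^m X_k+\sum_{k=m+1}^n X_k$; the first block, divided by $\sqrt n$, tends to $0$ a.s., so by dominated convergence it may be dropped. Conditioning on $\mu$, the variable $Z$ and the future block are conditionally independent, giving $E\bigl[Z\,e^{it\cdot\sum_{k>m}X_k/\sqrt n}\mid\mu\bigr]=E[Z\mid\mu]\,\bigl(\phi_\mu(t/\sqrt n)\bigr)^{n-m}$; taking expectations and combining the pointwise CLT with dominated convergence (the integrand is bounded by $1$) yields the limit $E\bigl[E[Z\mid\mu]\,e^{-\frac12 t^\top\Sigma t}\bigr]=E\bigl[Z\,e^{-\frac12 t^\top\Sigma t}\bigr]$, as required.

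The main obstacle is precisely the upgrade from ordinary convergence in distribution to stable convergence against an \emph{arbitrary} conditioning event $H\in\mathcal A$. Two ingredients make this work: the de Finetti conditional-i.i.d.\ representation, which lets me evaluate conditional characteristic functions exactly, and the reduction of the test variables $Z$ to $\mathcal F_m$-measurable ones via the $\mathcal F_\infty$-measurability of $Y_n$ and $\Sigma$. The mean-zero property of $\mu$ inherited from $K$ is what both legitimizes the $\sqrt n$ scaling and identifies the limiting random covariance as $\Sigma=\bigl(\int y_i\,y_j\,\mu(dy)\bigr)$.
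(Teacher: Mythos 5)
Your proposal is correct, and it diverges from the paper's proof at the decisive step. The preliminaries coincide: like the paper, you use Theorem \ref{t67v} to transfer the identity $\int y_i\,\mu^*(dy)=\sum_j V_j\int y_i\,K(Z_j)(dy)=0$ to $\mu$, obtaining $\int y_i\,\mu(dy)=0$ a.s. (your additional verification that $\int\norm{y}^2\,\mu(dy)<\infty$ a.s.\ is also sound, and is exactly the argument the paper defers to the proof of Theorem \ref{t6n9ij}). Where you part ways is in how stable convergence is established. The paper reduces to dimension one by the Cram\'er--Wold device and then invokes an external result, Theorem 3.1 of \cite{BPR2004}, a stable CLT for the conditionally centered sums $n^{-1/2}\sum_i\{b'X_i-E(b'X_1\mid\mu)\}$; the whole proof then amounts to computing the conditional mean and variance and checking that the centering vanishes. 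You instead prove the stable CLT from scratch: de Finetti's conditional i.i.d.\ representation, the pointwise classical CLT $\bigl(\phi_\mu(t/\sqrt n)\bigr)^{n}\to e^{-\frac12 t^\top\Sigma t}$ a.s., the reduction of test variables $Z$ first to $\mathcal{F}_\infty$-measurable ones (legitimate because $Y_n$ and a version of $\Sigma$ are $\mathcal{F}_\infty$-measurable) and then, by $L^1$ martingale approximation, to $\mathcal{F}_m$-measurable ones, for which conditional independence of the two blocks given $\mu$ gives the exact factorization $E[Z\mid\mu]\,\bigl(\phi_\mu(t/\sqrt n)\bigr)^{n-m}$. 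All of these steps check out, including the L\'evy-continuity justification of the characteristic-function criterion for stable convergence. What the paper's route buys is brevity, and its cited theorem holds for the broader class of conditionally identically distributed sequences; what your route buys is self-containedness (no appeal to the c.i.d.\ machinery of \cite{BPR2004}) and a proof that works directly in $\mathbb{R}^p$ without Cram\'er--Wold, at the cost of relying essentially on exchangeability, which is of course available here.
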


\begin{proof} By standard arguments, it suffices to show that
\begin{gather*}
\frac{\sum_{i=1}^nb'X_i}{\sqrt{n}}\overset{stably}\longrightarrow\mathcal{N}_1(0,\,b'\Sigma\,b)\quad\quad\text{for each }b\in\mathbb{R}^p,
\end{gather*}
where points of $\mathbb{R}^p$ are regarded as column vectors and $b'$ denotes the transpose of $b$. Define
\begin{gather*}
\sigma_b^2=E\bigl[(b'X_1)^2\mid\mu\bigr]-E(b'X_1\mid\mu)^2.
\end{gather*}
For fixed $b\in\mathbb{R}^p$, one obtains
\begin{gather*}
n^{-1/2}\,\sum_{i=1}^n\Bigl\{b'X_i-E(b'X_1\mid\mu)\Bigr\}\overset{stably}\longrightarrow\mathcal{N}_1(0,\sigma_b^2);
\end{gather*}
see e.g. \cite[Th. 3.1]{BPR2004} and the subsequent remark. Furthermore,
\begin{gather*}
E(b'X_1\mid\mu)=\int (b'y)\,\mu(dy)=\sum_{i=1}^pb_i\int y_i\,\mu(dy)\quad\quad\quad\text{a.s. and}
\\E\bigl[(b'X_1)^2\mid\mu\bigr]=\int (b'y)^2\,\mu(dy)=\sum_{i=1}^p\sum_{j=1}^pb_ib_j\,\int y_i\,y_j\,\mu(dy)=b'\Sigma\,b\quad\text{a.s.}
\end{gather*}
Hence, it suffices to show that $\int y_i\,\mu(dy)\overset{a.s.}=0$ for all $i$, and this follows from Theorem \ref{t67v}. In fact, $\int y_i\,\mu(dy)\sim\int y_i\,\mu^*(dy)$ and
\begin{gather*}
\int y_i\,\mu^*(dy)=\sum_jV_j\,\int y_i\,K(Z_j)(dy)=0.
\end{gather*}
This concludes the proof.
\end{proof}

\vspace{0.2cm}

Theorem \ref{c9mj5} applies to Examples \ref{u7z2} and \ref{q2q}. In fact, in Example \ref{u7z2}, one has $p=1$ and $K(x)=(\delta_x+\delta_{-x})/2$. Hence, $\int y\,K(x)(dy)=0$ for all $x\in\mathbb{R}$. Example \ref{q2q} is discussed below. Here, we give conditions for convergence in total variation of $n^{-1/2}\sum_{i=1}^nX_i$.

\begin{thm}\label{t6n9ij}
In addition to the conditions of Theorem \ref{c9mj5}, suppose that $K(x)$ is not singular, with respect to Lebesgue measure, for $\nu$-almost all $x\in\mathbb{R}^p$.
Define
\begin{gather*}
Y_n=n^{-1/2}\sum_{i=1}^nX_i\quad\text{and}\quad\lambda(A)=E\bigl\{\mathcal{N}_p(0,\Sigma^*)(A)\bigr\}
\\\text{for all }A\in\mathcal{B},\text{ where}\quad\Sigma^*=\left(\,\int y_i\,y_j\,\mu^*(dy):1\le i,\,j\le p\right).
\end{gather*}
Then,
\begin{gather*}
\lim_n\,\sup_{A\in\mathcal{B}}\,\Abs{P(Y_n\in A)-\lambda(A)}=0.
\end{gather*}
\end{thm}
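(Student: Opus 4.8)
The plan is to condition on $\mu$, reduce to the classical total variation central limit theorem for i.i.d. vectors, and then integrate $\mu$ out. First I would record the conditional structure. Since $X$ is exchangeable, it is i.i.d. given $\mu$ (Subsection \ref{c4rd5}); hence, conditionally on $\mu$, the vector $Y_n=n^{-1/2}\sum_{i=1}^nX_i$ has the law obtained by pushing $\mu^{*n}$ forward through $y\mapsto n^{-1/2}y$. As established inside the proof of Theorem \ref{c9mj5}, $\int y_i\,\mu(dy)\overset{a.s.}=0$, and $\int\norm{y}^2\,\mu(dy)<\infty$ a.s. (its expectation is $\int\norm{x}^2\,\nu(dx)$). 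Thus, conditionally on $\mu$, the summands have mean $0$ and second-moment matrix $\Sigma=\bigl(\int y_i y_j\,\mu(dy)\bigr)$, so $P(Y_n\in\cdot\mid\mu)$ is exactly the object in the classical CLT, with candidate limit $\mathcal{N}_p(0,\Sigma)$.

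The core step is to check that, for almost every realization, the conditional law $\mu$ satisfies the two hypotheses guaranteeing the i.i.d.\ CLT in total variation: (i) $\mu$ has a nonzero absolutely continuous component (the smoothing condition), and (ii) $\Sigma$ is positive definite (so that the limit is itself absolutely continuous, a necessary condition for total variation convergence). Both follow from the non-singularity of $K$ via Theorem \ref{t67v}. Indeed $\mu\overset{d}=\mu^*=\sum_jV_j\,K(Z_j)$, and since $V_1>0$ a.s.\ and $Z_1\sim\nu$, the measure $\mu^*$ dominates $V_1\,K(Z_1)$ with $K(Z_1)$ non-singular a.s., whence $\mu^*$ is non-singular. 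Moreover $K(Z_1)$ has mean $0$ and a nonzero density component, so its covariance $\int yy'\,K(Z_1)(dy)$ is positive definite, giving
\begin{gather*}
\Sigma^*=\int yy'\,\mu^*(dy)\succeq V_1\int yy'\,K(Z_1)(dy)\succ 0\quad\text{a.s.}
\end{gather*}
Since (i)--(ii) define $\mathcal{C}$-measurable subsets of $\mathcal{P}$ and $\mu\overset{d}=\mu^*$, they transfer from $\mu^*$ to $\mu$, so (i)--(ii) hold for $\mu$ almost surely.

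With this in hand, I would apply the classical total variation CLT (valid for i.i.d.\ mean-zero vectors with positive definite covariance whose law has an absolutely continuous component) to the conditional law $\mu$, for almost every realization, obtaining
\begin{gather*}
d_n:=\sup_{A\in\mathcal{B}}\,\Abs{P(Y_n\in A\mid\mu)-\mathcal{N}_p(0,\Sigma)(A)}\overset{a.s.}\longrightarrow 0,
\end{gather*}
where $d_n$ is measurable (the supremum may be restricted to a countable field generating $\mathcal{B}$) and bounded by $1$. Finally, using $P(Y_n\in A)=E\bigl[P(Y_n\in A\mid\mu)\bigr]$ together with $\mu\overset{d}=\mu^*$, hence $\Sigma\overset{d}=\Sigma^*$ and $\lambda(A)=E\bigl[\mathcal{N}_p(0,\Sigma)(A)\bigr]$, the inequality $\abs{E[\cdot]}\le E\abs{\cdot}$ yields
\begin{gather*}
\sup_{A\in\mathcal{B}}\,\Abs{P(Y_n\in A)-\lambda(A)}\le E[d_n],
\end{gather*}
and $E[d_n]\to 0$ by bounded convergence, which is the assertion. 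The main obstacle is the middle paragraph: cleanly deducing that $\mu$ inherits both a positive definite covariance and a non-degenerate absolutely continuous part from the single non-singularity assumption on $K$, and justifying the transfer of these (measurable) properties from $\mu^*$ to $\mu$. Once the conditional hypotheses are secured, the appeal to the classical total variation CLT and the bounded-convergence integration are routine.
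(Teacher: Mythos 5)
Your proposal is correct and follows essentially the same route as the paper: condition on $\mu$, use Theorem \ref{t67v} to transfer non-singularity from $K$ to $\mu$ via $\mu^*=\sum_jV_j\,K(Z_j)$, apply the i.i.d.\ total variation CLT conditionally, and integrate out with $\Sigma\sim\Sigma^*$ and bounded convergence. The only difference is that you spell out details the paper leaves implicit in its citation of the total variation CLT, namely the positive definiteness of $\Sigma^*$ (via the support argument ruling out concentration on a hyperplane) and the $\mathcal{C}$-measurability needed to transfer properties from $\mu^*$ to $\mu$.
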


\begin{proof}
Let $D$ be the collection of elements of $\mathcal{P}$ which are not singular with respect to Lebesgue measure. By Theorem \ref{t67v}, $P(\mu\in D)=P(\mu^*\in D)=1$. Hence, conditionally on $\mu$, the sequence $X$ is i.i.d. and the common distribution $\mu$ belongs to $D$ a.s. Arguing as in Theorem \ref{c9mj5}, one also obtains $\int y_i\,\mu(dy)=0$ and $\int\norm{y}^2\mu(dy)<\infty$ a.s. for all $i$. Thus, conditionally on $\mu$, $Y_n$ converges to $\mathcal{N}_p(0,\Sigma)$ in total variation (see e.g. \cite{BACA}) that is
\begin{gather*}
\sup_{A\in\mathcal{B}}\,\Abs{P(Y_n\in A\mid\mu)-\mathcal{N}_p(0,\Sigma)(A)}\overset{a.s.}\longrightarrow 0.
\end{gather*}
Finally, $\Sigma\sim\Sigma^*$ implies $\lambda(\cdot)=E\bigl\{\mathcal{N}_p(0,\Sigma)(\cdot)\bigr\}$. Hence,
\begin{gather*}
\sup_{A\in\mathcal{B}}\,\Abs{P(Y_n\in A)-\lambda(A)}=\sup_{A\in\mathcal{B}}\,\Abs{P(Y_n\in A)-E\bigl\{\mathcal{N}_p(0,\Sigma)(A)\bigr\}}
\\\le E\left\{\sup_{A\in\mathcal{B}}\,\Abs{P(Y_n\in A\mid\mu)-\mathcal{N}_p(0,\Sigma)(A)}\right\}\,\longrightarrow\,0\quad\quad\text{as }n\rightarrow\infty.
\end{gather*}
\end{proof}

Our last result deals with the posterior distribution of $\mu$. We aim to find the conditional distribution of $\mu$ given $\mathcal{F}_n=\sigma(X_1,\ldots,X_n)$. To this end, for each $n\ge 1$, we denote by
\begin{gather*}
V^{(n)}=\bigl(V_j^{(n)}:j\ge 1\bigr)\quad\text{and}\quad Z^{(n)}=\bigl(Z_j^{(n)}:j\ge 1\bigr)
\end{gather*}
two sequences such that:

\vspace{0.2cm}

\begin{itemize}

\item[(i)] $V^{(n)}$ and $Z^{(n)}$ are conditionally independent given $\mathcal{F}_n$;

\vspace{0.2cm}

\item[(ii)] $V^{(n)}$ has the stick-breaking distribution with parameter $n+\theta$ conditionally on $\mathcal{F}_n$;

\vspace{0.2cm}

\item[(iii)] $Z^{(n)}$ is i.i.d. conditionally on $\mathcal{F}_n$ with
\begin{gather*}
P(Z_1^{(n)}\in\cdot\mid\mathcal{F}_n)=P\bigl(X_{n+1}\in\cdot\mid\mathcal{F}_n\bigr)=\frac{\theta\nu(\cdot)+\sum_{i=1}^nK(X_i)(\cdot)}{n+\theta}\quad\quad\text{a.s.}
\end{gather*}

\end{itemize}

\vspace{0.2cm}

\noindent Moreover, we let
\begin{gather*}
\mu_n^*=\sum_jV_j^{(n)}\,K\bigl(Z_j^{(n)}\bigr).
\end{gather*}

\begin{thm}\label{om55d}
If $X\in\mathcal{L}$, then
\begin{gather*}
P(\mu\in C\mid\mathcal{F}_n)=P(\mu_n^*\in C\mid\mathcal{F}_n)\quad\quad\text{a.s. for all }C\in\mathcal{C}\text{ and }n\ge 1.
\end{gather*}
\end{thm}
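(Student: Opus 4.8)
The plan is to exploit the predictive structure \eqref{ct6h} directly and to reduce the statement to the unconditional Sethuraman representation of Theorem \ref{t67v}, applied to the ``future'' of the sequence. Write $\nu_n=P(X_{n+1}\in\cdot\mid\mathcal{F}_n)=\frac{\theta\nu+\sum_{i=1}^nK(X_i)}{n+\theta}$ and $\theta_n=n+\theta$. First I would check that, conditionally on $\mathcal{F}_n$, the shifted sequence $(X_{n+m}:m\ge 1)$ obeys the recursion \eqref{ct6h} with base $\nu_n$, constant $\theta_n$ and the \emph{same} kernel $K$. This is a one-line telescoping computation: since $\theta_n\nu_n+\sum_{l=1}^mK(X_{n+l})=\theta\nu+\sum_{i=1}^{n+m}K(X_i)$ and $\theta_n+m=n+m+\theta$, the conditional predictive of $X_{n+m+1}$ given $\mathcal{F}_{n+m}$ is exactly $\frac{\theta_n\nu_n+\sum_{l=1}^mK(X_{n+l})}{\theta_n+m}$.

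Next I would verify that $K$ is a r.c.d.\ for $\nu_n$ given $\sigma(K)$, so that condition \eqref{v56yh} holds for the future with base $\nu_n$. Since $K(\cdot)(A)$ is $\sigma(K)$-measurable by construction, it suffices to prove $\int_GK(x)(A)\,\nu_n(dx)=\nu_n(A\cap G)$ for all $A\in\mathcal{B}$ and $G\in\sigma(K)$. The contribution of $\theta\nu$ matches because $K$ is a r.c.d.\ for $\nu$; the contributions of the $K(X_i)$ reduce, via Lemma \ref{w5v7uj} and the identity $K(X_i)(A\cap G)=1_G(X_i)\,K(X_i)(A)$ (valid since $G\in\sigma(K)$ and $X_i\in F$, by Lemma \ref{w3d5z5n9i}), to $1_G(X_i)\,K(X_i)(A)$ on both sides. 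Hence, by Theorem \ref{bn7}, conditionally on $\mathcal{F}_n$ the future is a sequence in $\mathcal{L}$ with parameters $(\nu_n,\theta_n,K)$.

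Now I would observe that the weak limit of the empirical measures of the future equals $\mu$, because Cesàro limits are unchanged by deleting the first $n$ terms, so the directing measure of the future is $\mu$ itself. Applying Theorem \ref{t67v} to the future sequence, whose base is $\nu_n$ and whose constant is $\theta_n=n+\theta$, represents the conditional law of $\mu$ as that of $\sum_jV_jK(Z_j)$ with $(V_j)$ of stick-breaking type with parameter $n+\theta$, $(Z_j)$ i.i.d.\ $\nu_n=P(X_{n+1}\in\cdot\mid\mathcal{F}_n)$, and the two sequences independent. This is precisely the description (i)--(iii) of $\mu_n^*$, giving $P(\mu\in C\mid\mathcal{F}_n)=P(\mu_n^*\in C\mid\mathcal{F}_n)$.

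The main obstacle is that Theorem \ref{t67v} is stated for \emph{deterministic} parameters, whereas here the base $\nu_n$ is random (it is $\mathcal{F}_n$-measurable). Making the final step rigorous therefore requires a disintegration: fix a regular conditional distribution $Q_\omega$ of the future given $\mathcal{F}_n$, note that for $P$-almost every $\omega$ the law $Q_\omega$ is that of a sequence in $\mathcal{L}$ with the now-deterministic parameters $(\nu_n(\omega),n+\theta,K)$ (the verifications above hold off a single null set, using that $(S,\mathcal{B})$ is standard so that the r.c.d.\ identity for each fixed $A$ propagates to a genuine kernel), apply Theorem \ref{t67v} pointwise in $\omega$, and reassemble the resulting identity measurably in $\omega$. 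Checking that the hypotheses of Theorem \ref{t67v} hold simultaneously for almost every $\omega$, and that the pointwise conclusions integrate back to the claimed equality of conditional distributions on $\mathcal{C}$, is the technical heart of the argument; the remainder is the bookkeeping of the two routine computations above.
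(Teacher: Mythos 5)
Your proposal is correct and follows essentially the same route as the paper's own proof: condition on $\mathcal{F}_n$, verify by the telescoping computation that the shifted sequence $(X_{n+m}:m\ge 1)$ again satisfies \eqref{ct6h} with base $P(X_{n+1}\in\cdot\mid\mathcal{F}_n)$ and constant $n+\theta$ but the same kernel $K$, and then invoke Theorem \ref{t67v} for this conditional sequence to identify its directing measure (which is still $\mu$) with $\mu_n^*$. If anything, you supply two details the paper treats as immediate, namely the check that $K$ is a r.c.d.\ for the updated base measure given $\sigma(K)$ (via Lemmas \ref{w3d5z5n9i} and \ref{w5v7uj}) and the disintegration needed to apply Theorem \ref{t67v} with $\mathcal{F}_n$-measurable random parameters.
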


\vspace{0.2cm}

We recall that, if $X\in\mathcal{L}_0$ and $X$ has parameter $\theta\nu$ (i.e., if $K=\delta$) then
\begin{gather*}
P(\mu\in C\mid\mathcal{F}_n)=\mathcal{D}\Bigl(\theta\nu+\sum_{i=1}^n\delta_{X_i}\Bigr)(C)=P(\mu_n^*\in C\mid\mathcal{F}_n)\quad\quad\text{a.s.}
\end{gather*}
Hence, Theorem \ref{om55d} extends to $\mathcal{L}$ the conjugacy property of $\mathcal{L}_0$. Such a property is clearly useful as regards Bayesian statistical inference. On one hand, the Bayesian analysis of $X\in\mathcal{L}$ is as simple as that of $X\in\mathcal{L}_0$. On the other hand, $\mathcal{L}$ is able to model much more situations than $\mathcal{L}_0$. As an obvious example, for $X\in\mathcal{L}$, it may be that $P(X_i=X_j)=0$ if $i\ne j$. See e.g. Example \ref{w4f} and Theorem \ref{i92xf}.

\vspace{0.2cm}

Theorem \ref{om55d} can be proved in various ways. We report here the simplest and most direct proof. Such a proof relies on Theorem \ref{t67v} and the definition of $\mathcal{L}$ in terms of predictive distributions.

\vspace{0.2cm}

\begin{proof}[\textbf{Proof of Theorem \ref{om55d}}]

Throughout this proof, if $X$ satisfies conditions \eqref{ct6h}-\eqref{v56yh}, we say that $X\in\mathcal{L}$ and $X$ has {\em parameter} $(\theta\nu,\,K)$.

\vspace{0.2cm}

Fix $n\ge 1$ and define the sequence
\begin{gather*}
X^{(n)}=\bigl(X^{(n)}_i:i\ge 1\bigr)=\bigl(X_{n+i}:i\ge 1\bigr).
\end{gather*}
Define also the random measure
\begin{gather*}
J_n=\theta\nu+\sum_{i=1}^nK(X_i).
\end{gather*}
It suffices to show that, conditionally on $\mathcal{F}_n$, one obtains
\begin{gather}\label{gyy87j}
X^{(n)}\in\mathcal{L}\quad\text{and}\quad X^{(n)}\text{ has parameter }(J_n,\,K)\text{ a.s.}
\end{gather}
In fact, under \eqref{gyy87j}, Theorem \ref{t67v} implies that $\mu\sim\mu_n^*$ conditionally on $\mathcal{F}_n$, namely
\begin{gather*}
P(\mu\in\cdot\mid\mathcal{F}_n)=P(\mu_n^*\in\cdot\mid\mathcal{F}_n)\quad\quad\text{a.s.}
\end{gather*}

\vspace{0.2cm}

In turn, condition \eqref{gyy87j} follows directly from the definition. Define in fact
\begin{gather*}
P^{\mathcal{F}_n}\bigl(X^{(n)}\in B\bigr)=P\bigl(X^{(n)}\in B\mid\mathcal{F}_n\bigr)\quad\quad\text{for all }B\in\mathcal{B}^\infty\text{ a.s.}
\end{gather*}
Then,
\begin{gather*}
P^{\mathcal{F}_n}\bigl(X^{(n)}_1\in\cdot\bigr)=P\bigl(X_{n+1}\in\cdot\mid\mathcal{F}_n\bigr)=\frac{\theta\nu+\sum_{i=1}^{n}K(X_i)}{n+\theta}=\frac{J_n}{n+\theta}\quad\text{a.s.}
\end{gather*}
and
\begin{gather*}
P^{\mathcal{F}_n}\Bigl(X^{(n)}_{m+1}\in\cdot\mid X^{(n)}_1,\ldots,X^{(n)}_m\Bigr)=P\bigl(X_{n+m+1}\in\cdot\mid\mathcal{F}_{n+m}\bigr)
\\=\frac{\theta\nu+\sum_{i=1}^{n+m}K(X_i)}{n+m+\theta}=\frac{J_n+\sum_{i=1}^mK\bigl(X^{(n)}_i\bigr)}{(n+\theta)+m}\quad\text{a.s. for all }m\ge 1.
\end{gather*}
This concludes the proof.
\end{proof}

\vspace{0.2cm}

\section{Open problems and examples}

This section is split into two parts. First, we discuss some hints for future research and then we give three further examples.

\begin{itemize}

\item\textbf{An enlargment of $\mathcal{L}$.} The class $\mathcal{L}$ could be made larger. In this case, however, some of the basic properties of $\mathcal{L}_0$ would be lost. As an example, suppose that
\begin{gather*}
X_1\sim\nu\quad\text{and}\quad P\bigl(X_{n+1}\in\cdot\mid\mathcal{F}_n\bigr)=c_n\,\nu(\cdot)+(1-c_n)\,\,\frac{\sum_{i=1}^nK(X_i)(\cdot)}{n}\quad\quad\text{a.s.,}
\end{gather*}
where the kernel $K$ satisfies condition \eqref{v56yh} and $c_n\in [0,1]$ is a constant. To make $X$ closer to $\mathcal{L}_0$, suppose also that $\lim_nc_n=0$. Then, $X$ is exchangeable and $X\in\mathcal{L}$ provided $c_n=\theta/(n+\theta)$. Furthermore, various properties of $\mathcal{L}_0$ are preserved, including $\mu\sim\sum_jV_j\,K(Z_j)$ where $(V_j)$ and $(Z_j)$ are independent sequences and $(Z_j)$ is i.i.d. with $Z_1\sim\nu$. Unlike Theorem \ref{t67v}, however, the probability distribution of $(V_j)$ is unknown (to us). Similarly, we do not know whether some form of Theorem \ref{om55d} is still valid.

\vspace{0.2cm}

\item\textbf{A characterization of $\mathcal{L}$.} Denoting by $\mathcal{L}^*$ the class of kernel based Dirichlet sequences, it is tempting to conjecture that $\mathcal{L}=\mathcal{L}^*$. Since $\mathcal{L}\subset\mathcal{L}^*$, the question is whether there is an exchangeable sequence satisfying condition \eqref{ct6h} but not condition \eqref{v56yh}. Lemma \ref{w5v7uj} and Theorem \ref{bn7} may be useful to address this issue.

\vspace{0.2cm}

\item\textbf{Self-similarity.} Suppose $X\in\mathcal{L}_0$ and take $A\in\mathcal{B}$ such that $0<\nu(A)<1$. Then, the distribution of the random probability measure $\mu(\cdot\mid A)$ is still of the Dirichlet type with $\nu$ and $\theta$ replaced by $\nu(\cdot\mid A)$ and $\theta\,\nu(A)$, respectively. In addition, $\mu(A)$, $\mu(\cdot\mid A)$ and $\mu(\cdot\mid A^c)$ are independent random elements; see \cite[p. 61]{GHOSVANDER}. A question is whether this property of $\mathcal{L}_0$, called {\em self-similarity}, is still true for $\mathcal{L}$. Suppose $X\in\mathcal{L}$ and $K$ is a r.c.d. for $\nu$ given the sub-$\sigma$-field $\mathcal{G}\subset\mathcal{B}$. If $A\in\mathcal{G}$, then $K(X_i)(A)=1_A(X_i)$ a.s. for all $i$. Based on this fact, $\mu(\cdot\mid A)$ can be shown to have the same distribution as $\mu$ with $\nu$ and $\theta$ replaced by $\nu(\cdot\mid A)$ and $\theta\,\nu(A)$. Hence, $\mathcal{L}$ satisfies some form of self-similarity when $A\in\mathcal{G}$. However, we do not know whether $\mu(A)$, $\mu(\cdot\mid A)$ and $\mu(\cdot\mid A^c)$ are independent. Similarly, we do not know what happens if $A\notin\mathcal{G}$.

\vspace{0.2cm}

\item\textbf{Topological support.} The topological support of a Borel probability $\lambda$ on a separable metric space, denoted $\mathcal{S}(\lambda)$, is the smallest closed set $A$ satisfying $\lambda(A)=1$. Let $\mathcal{P}$ be equipped with the topology of weak convergence, i.e., the weakest topology on $\mathcal{P}$ which makes continuous the maps $p\mapsto\int f\,dp$ for all bounded continuous functions $f:S\rightarrow\mathbb{R}$. Moreover, let
\begin{gather*}
\Pi(C)=P(\mu\in C),\quad\quad C\in\mathcal{C},
\end{gather*}
be the prior corresponding to $\mu$. It is well known that
\begin{gather*}
\mathcal{S}(\Pi)=\bigl\{p\in\mathcal{P}:\mathcal{S}(p)\subset\mathcal{S}(\nu)\bigr\}
\end{gather*}
whenever $X\in\mathcal{L}_0$; see \cite{FERG} and \cite{MAJUM}. As a consequence, $\mathcal{S}(\Pi)=\mathcal{P}$ if $\mathcal{S}(\nu)=S$. A (natural) question is whether, under some conditions on $K$, this basic property of $\mathcal{L}_0$ is preserved by $\mathcal{L}$. The next result provides a partial answer.

\begin{prop}
If $X\in\mathcal{L}$ and $\mathcal{S}(\Pi)=\mathcal{P}$, then
\begin{gather}\label{hg9im5}
\nu\bigl\{x\in S:K(x)(A)\le u\bigr\}< 1
\end{gather}
for all $u<1$ and all non-empty open sets $A\subset S$.
\end{prop}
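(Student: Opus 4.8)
The plan is to argue by contraposition, reducing the statement to the Sethuraman-type representation of Theorem \ref{t67v} and then exploiting a lower-semicontinuity property of the evaluation map $p\mapsto p(A)$ for open $A$. So suppose, toward a contradiction, that \eqref{hg9im5} fails: there exist a constant $u<1$ and a non-empty open set $A\subset S$ such that
\[
\nu\bigl\{x\in S:K(x)(A)\le u\bigr\}=1.
\]
First I would transfer this bound from $K$ to $\mu$. By Theorem \ref{t67v}, $\mu$ has the same distribution as $\mu^*=\sum_jV_j\,K(Z_j)$, where $(Z_j)$ is i.i.d. with $Z_1\sim\nu$ and $(V_j)$ is independent stick-breaking. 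Since $\nu$-almost every $x$ satisfies $K(x)(A)\le u$, we have $K(Z_j)(A)\le u$ a.s. for every $j$, whence
\[
\mu^*(A)=\sum_jV_j\,K(Z_j)(A)\le u\,\sum_jV_j=u\quad\text{a.s.,}
\]
using $\sum_jV_j=1$ a.s. Therefore $\mu(A)\le u$ a.s., i.e. $P\bigl(\mu(A)>u\bigr)=0$.

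Next I would derive the contradiction through the topological support. Consider the set
\[
U=\bigl\{p\in\mathcal{P}:p(A)>u\bigr\}.
\]
Because $A$ is open, the Portmanteau theorem gives $\liminf_n p_n(A)\ge p(A)$ whenever $p_n\to p$ weakly, so $p\mapsto p(A)$ is lower semicontinuous on $\mathcal{P}$ and its superlevel set $U$ is weakly open. Moreover $U$ is non-empty: picking any $a\in A$, the Dirac mass $\delta_a$ satisfies $\delta_a(A)=1>u$, so $\delta_a\in U$. Since by hypothesis $\mathcal{S}(\Pi)=\mathcal{P}$, every non-empty weakly open subset of $\mathcal{P}$ has strictly positive $\Pi$-measure; in particular $\Pi(U)>0$, which reads
\[
P\bigl(\mu(A)>u\bigr)=P(\mu\in U)=\Pi(U)>0.
\]
This contradicts $P\bigl(\mu(A)>u\bigr)=0$ obtained above, completing the argument.

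The main obstacle is the second paragraph, namely identifying the correct weakly open set and verifying that the support hypothesis applies to it. One must use that $A$ is \emph{open} (not merely Borel) to obtain lower semicontinuity of $p\mapsto p(A)$, and one must recall that, for the Polish space $\mathcal{P}$ under the weak topology, the $\sigma$-field $\mathcal{C}$ coincides with the Borel $\sigma$-field, so that $U\in\mathcal{C}$ and the condition $\mathcal{S}(\Pi)=\mathcal{P}$ genuinely forces $\Pi(U)>0$. Once the distributional identity $\mu\sim\mu^*$ and these two topological facts are in place, the remaining steps are routine.
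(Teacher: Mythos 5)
Your proposal is correct and follows essentially the same route as the paper's own proof: the same auxiliary set $U=\{p\in\mathcal{P}:p(A)>u\}$, the same transfer of the bound $K(x)(A)\le u$ to $\mu^*(A)\le u$ via Theorem \ref{t67v}, and the same contradiction with the full-support hypothesis. The only difference is that you spell out details the paper leaves implicit (lower semicontinuity of $p\mapsto p(A)$ for open $A$, and non-emptiness of $U$ via a Dirac mass), which is fine.
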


\begin{proof}
First note that $\mathcal{S}(\Pi)=\mathcal{P}$ if and only if $\Pi(U)>0$ for each non-empty open set $U\subset\mathcal{P}$. Having noted this fact, suppose $\nu\bigl\{x\in S:K(x)(A)\le u\bigr\}=1$, for some $u<1$ and some non-empty open set $A\subset S$, and define
\begin{gather*}
U=\bigl\{p\in\mathcal{P}:p(A)>u\bigr\}.
\end{gather*}
Then, $U$ is open and non-empty. Moreover, if $V_j$, $Z_j$ and $\mu^*$ are as in Section \ref{r1e2s3}, one obtains $K(Z_j)(A)\le u$ for all $j$ a.s. and
\begin{gather*}
\mu^*(A)=\sum_jV_j\,K(Z_j)(A)\le u\,\sum_jV_j=u\quad\quad\text{a.s.}
\end{gather*}
By Theorem \ref{t67v}, it follows that
\begin{gather*}
\Pi(U)=P\bigl(\mu(A)>u\bigr)=P\bigl(\mu^*(A)>u\bigr)=0.
\end{gather*}
Hence, $\mathcal{S}(\Pi)$ is a proper subset of $\mathcal{P}$.
\end{proof}
Possibly, some version of condition \eqref{hg9im5} suffices for $\mathcal{S}(\Pi)=\mathcal{P}$. However, condition \eqref{hg9im5} alone suggests that $\mathcal{S}(\Pi)$ is usually a proper subset of $\mathcal{P}$. In Example \ref{u7z2}, for instance, condition \eqref{hg9im5} fails (just take $A=(0,\infty)$ and note that $K(x)(A)\le 1/2$ for all $x$). Finally, we mention here a property of $\mathcal{L}_0$ which is preserved by $\mathcal{L}$. If $X\in\mathcal{L}$ and $\mathcal{S}(\nu)=S$, the prior $\Pi$ a.s. selects probability measures with full support, i.e.
\begin{gather*}
\Pi\bigl\{p\in\mathcal{P}:\mathcal{S}(p)=S\bigr\}=1.
\end{gather*}

\end{itemize}

\vspace{0.2cm}

We next turn to examples.

\vspace{0.2cm}

\begin{ex}\label{v5e3si9} \textbf{(Example \ref{w4f} continued).}
Let $\mathcal{H}\subset\mathcal{B}$ be a countable partition of $S$ such that $\nu(H)>0$ for all $H\in\mathcal{H}$. Then, $K(x)=\nu\bigl[\cdot\mid H(x)\bigr]$ is a r.c.d. for $\nu$ given $\sigma(\mathcal{H})$, where $H(x)$ is the only $H\in\mathcal{H}$ such that $x\in H$. Therefore, $X\in\mathcal{L}$ provided $X_1\sim\nu$ and
\begin{gather*}
P\bigl(X_{n+1}\in\cdot\mid\mathcal{F}_n\bigr)=\frac{\theta\nu(\cdot)+\sum_{i=1}^n\nu\bigl[\cdot\mid H(X_i)\bigr]}{n+\theta}\quad\quad\text{a.s.}
\end{gather*}

In this example, for each $A\in\mathcal{B}$, one obtains
\begin{gather*}
\mu(A)=\lim_nP\bigl(X_{n+1}\in A\mid\mathcal{F}_n\bigr)=\sum_{H\in\mathcal{H}}\mu(H)\,\nu(A\mid H)\quad\text{a.s.}
\end{gather*}
where $\mu(H)\overset{a.s.}=\lim_n(1/n)\,\sum_{i=1}^n1_H(X_i)$. To grasp further information about $\mu$, define
\begin{gather*}
b(H)=\sum_jV_j\,1_H(Z_j),\quad\quad H\in\mathcal{H},
\end{gather*}
where $(V_j)$ and $(Z_j)$ are independent, $(Z_j)$ is i.i.d. with $Z_1\sim\nu$, and $(V_j)$ has the stick breaking distribution with parameter $\theta$. Then, Theorem \ref{t67v} yields
\begin{gather*}
\mu\sim\mu^*=\sum_{H\in\mathcal{H}}b(H)\,\nu(\cdot\mid H).
\end{gather*}
Therefore,
\begin{gather*}
\bigl(\mu(H):H\in\mathcal{H}\bigr)\sim \bigl(\mu^*(H):H\in\mathcal{H}\bigr)=\bigl(b(H):H\in\mathcal{H}\bigr).
\end{gather*}

To evaluate the posterior distribution of $\mu$, fix $n\ge 1$ and take two sequences $V^{(n)}=\bigl(V_j^{(n)}:j\ge 1\bigr)$ and $Z^{(n)}=\bigl(Z_j^{(n)}:j\ge 1\bigr)$ satisfying conditions (i)-(ii)-(iii). Recall that, by (iii), $Z^{(n)}$ is i.i.d. conditionally on $\mathcal{F}_n$ with
\begin{gather*}
P(Z_1^{(n)}\in\cdot\mid\mathcal{F}_n)=P\bigl(X_{n+1}\in\cdot\mid\mathcal{F}_n\bigr)\quad\quad\text{a.s.}
\end{gather*}
Define
\begin{gather*}
b_n(H)=\sum_jV_j^{(n)}\,1_H\bigl(Z_j^{(n)}\bigr)\quad\text{and}\quad\mu_n^*=\sum_{H\in\mathcal{H}}b_n(H)\,\nu(\cdot\mid H).
\end{gather*}
Then, Theorem \ref{om55d} implies $\mu\sim\mu_n^*$ conditionally on $\mathcal{F}_n$.
\end{ex}

\vspace{0.2cm}

\begin{ex}\label{q2q}
Let $\norm{\cdot}$ be the Euclidean norm on $S=\mathbb{R}^p$. For $t\ge 0$, let $\mathcal{U}_t\in\mathcal{P}$ be uniform on the spherical surface $\{x:\norm{x}=t\}$ (with $\mathcal{U}_0=\delta_0$) and
\begin{gather*}
\nu(A)=\int_0^\infty \mathcal{U}_t(A)\,e^{-t}\,dt\quad\quad\text{for all }A\in\mathcal{B}.
\end{gather*}
Then, $K(x)=\mathcal{U}_{\norm{x}}$ is a r.c.d. for $\nu$ given $\sigma(\norm{\cdot})$. Hence, $X\in\mathcal{L}$ whenever $X_1\sim\nu$ and
\begin{gather*}
P\bigl(X_{n+1}\in\cdot\mid\mathcal{F}_n\bigr)=\frac{\theta\nu(\cdot)+\sum_{i=1}^n \mathcal{U}_{\norm{X_i}}(\cdot)}{n+\theta}\quad\quad\text{a.s.}
\end{gather*}

Theorem \ref{c9mj5} applies to this example. To see this, first note that
\begin{gather*}
\int\norm{x}^2\,\nu(dx)=\int_0^\infty\int\norm{x}^2\,\mathcal{U}_t(dx)\,e^{-t}dt=\int_0^\infty t^2\,e^{-t}dt<\infty.
\end{gather*}
Moreover, since $\mathcal{U}_t$ is invariant under rotations,
\begin{gather}\label{z3a4lio}
\int y_i\,\mathcal{U}_t(dy)=\int y_i\,y_j\,\mathcal{U}_t(dy)=0\quad\text{and}\quad\int y_i^2\,\mathcal{U}_t(dy)=t^2/p
\end{gather}
for all $t$, all $i$ and all $j\ne i$. (Recall that $y_i$ denotes the $i$-th coordinate of a point $y\in\mathbb{R}^p$). Because of \eqref{z3a4lio},
\begin{gather*}
\int y_i\,K(x)(dy)=\int y_i\,\mathcal{U}_{\norm{x}}(dy)=0\quad\quad\text{for all }x\in\mathbb{R}^p\text{ and }i=1,\ldots,p.
\end{gather*}
Therefore, Theorem \ref{c9mj5} yields
\begin{gather*}
\frac{\sum_{i=1}^nX_i}{\sqrt{n}}\overset{stably}\longrightarrow\mathcal{N}_p(0,\Sigma)
\end{gather*}
where $\Sigma$ is the random covariance matrix with entries
\begin{gather*}
\sigma_{ij}=\int y_i\,y_j\,\mu(dy)=\lim_n\frac{1}{n}\,\sum_{r=1}^n\int y_i\,y_j\,\mathcal{U}_{\norm{X_r}}(dy)\quad\quad\text{a.s.}
\end{gather*}
It is even possible be more precise about $\Sigma$. In fact, using \eqref{z3a4lio} again, one obtains $\sigma_{ij}=0$ for $i\ne j$ and
\begin{gather*}
\sigma_{ii}=\lim_n\frac{1}{n}\,\sum_{r=1}^n\int y_i^2\,\mathcal{U}_{\norm{X_r}}(dy)=\frac{1}{p}\,\lim_n\frac{1}{n}\,\sum_{r=1}^n\norm{X_r}^2=\frac{1}{p}\,\int\norm{x}^2\,\mu(dx)\quad\text{a.s.}
\end{gather*}
Hence, if $I$ denotes the $p\times p$ identity matrix,
\begin{gather*}
\Sigma=\sigma_{11}\,I\quad\text{where}\quad\sigma_{11}=(1/p)\,\int\norm{x}^2\,\mu(dx).
\end{gather*}
Two last remarks are in order. First, in the notation of Theorem \ref{t67v},
\begin{gather*}
\int\norm{x}^2\,\mu(dx)\sim\int\norm{x}^2\,\mu^*(dx)=\sum_jV_j\,\norm{Z_j}^2.
\end{gather*}
Second, exploiting stable convergence and $\sigma_{11}>0$ a.s., one also obtains
\begin{gather*}
\sqrt{p}\,\,\frac{\sum_{i=1}^nX_i}{\sqrt{\sum_{i=1}^n\norm{X_i}^2}}=\sqrt{p}\,\,\frac{n^{-1/2}\sum_{i=1}^nX_i}{\sqrt{n^{-1}\sum_{i=1}^n\norm{X_i}^2}}\,\overset{stably}\longrightarrow\,\mathcal{N}_p(0,I).
\end{gather*}
\end{ex}

\vspace{0.2cm}

\begin{ex}\label{w35b8uh6}
Let $F$ be a countable class of measurable maps $f:S\rightarrow S$ and
\begin{gather*}
\mathcal{I}=\bigl\{\lambda\in\mathcal{P}:\lambda=\lambda\circ f^{-1}\text{ for each }f\in F\bigr\}
\end{gather*}
the set of $F$-invariant probability measures. Let
\begin{gather*}
\mathcal{G}=\bigl\{A\in\mathcal{B}:f^{-1}(A)=A\text{ for all }f\in F\bigr\}
\end{gather*}
be the sub-$\sigma$-field of $F$-invariant measurable sets. In this example, we assume that $\nu\in\mathcal{I}$ and conditions \eqref{ct6h}-\eqref{v56yh} hold with $\mathcal{G}$ as above.

\medskip

Under these conditions, it is not hard to see that $K(x)\in \mathcal{I}$ for $\nu$-almost all $x\in S$; see e.g. \cite{MAITRA}. Hence, $P\bigl(X_{n+1}\in\cdot\mid\mathcal{F}_n\bigr)\in\mathcal{I}$ a.s. which in turn implies
\begin{gather*}
\mu(f^{-1}A)\overset{a.s.}=\lim_n P\bigl(f(X_{n+1})\in A\mid\mathcal{F}_n\bigr)\overset{a.s.}=\lim_n P\bigl(X_{n+1}\in A\mid\mathcal{F}_n\bigr)\overset{a.s.}=\mu(A)
\end{gather*}
for fixed $A\in\mathcal{B}$ and $f\in F$. Since $F$ is countable and $\mathcal{B}$ countably generated, one finally obtains
\begin{gather*}
P(\mu\in\mathcal{I})=1.
\end{gather*}
This fact is meaningful from the Bayesian point of view. It means that the prior corresponding to $\mu$ (namely, $\Pi(C)=P(\mu\in C)$ for all $C\in\mathcal{C}$)
selects $F$-invariant laws a.s. Such priors are actually useful in some practical problems; see e.g. \cite{DALAL} and \cite{HOZA}.

\medskip

Example \ref{u7z2} is a special case of the previous choice of $\mathcal{G}$. Another example, borrowed from \cite[Ex. 12]{BDPR2021}, is $S=\mathbb{R}^d$ and $F$ the class of all permutations of $\mathbb{R}^d$. In this case, $\mathcal{I}$ is the set of exchangeable probabilities on the Borel sets of $\mathbb{R}^d$. Moreover, if $\nu$ is exchangeable, $K$ can be written as
\begin{gather*}
K(x)=\frac{\sum_{\pi\in F}\delta_{\pi(x)}}{d!}\quad\quad\text{for all }x\in\mathbb{R}^d.
\end{gather*}

\medskip

A last remark is in order.

\medskip

\textbf{Claim:} If $A_1,\ldots,A_k$ is a partition of $S$ such that $A_i\in\mathcal{G}$ for all $i$, then the $k$-dimensional vector $\bigl(\mu(A_1),\ldots,\mu(A_k)\bigr)$ has Dirichlet distribution with parameters $\theta\,\nu(A_1),\ldots,\theta\,\nu(A_k)$.

\medskip

To prove the Claim, because of Theorem \ref{t67v}, it suffices to show that $\bigl(\mu^*(A_1),\ldots,\mu^*(A_k)\bigr)$ has the desired distribution. In addition, $K(x)(A_i)=1_{A_i}(x)=\delta_x(A_i)$, for $\nu$-almost all $x\in S$, since $A_i\in\mathcal{G}$ and $K$ is a r.c.d. for $\nu$ given $\mathcal{G}$. Therefore,
\begin{gather*}
\mu^*(A_i)=\sum_jV_j\,K(Z_j)(A_i)\overset{a.s.}=\sum_jV_j\,\delta_{Z_j}(A_i)\quad\quad\text{for all }i,
\end{gather*}
and this implies that $\bigl(\mu^*(A_1),\ldots,\mu^*(A_k)\bigr)$ has Dirichlet distribution with parameters $\theta\,\nu(A_1),\ldots,\theta\,\nu(A_k)$.

\medskip

In view of the Claim, $\mu$ is a Dirichlet invariant process in the sense of Definition 2 of \cite{DALAL}. Thus, arguing as above, a large class of such processes can be easily obtained. Note also that, unlike \cite{DALAL}, $F$ is not necessarily a group.
\end{ex}

\medskip

\textbf{Acknowledgments:}
This paper has been improved by the remarks and suggestions of the AE and four anonymous referees.

\medskip

\end{document}